\newtheorem{theo}{Theorem}[section]
\newtheorem{pro}[theo]{Proposition}
\newtheorem{coro}[theo]{Corollary}
\newtheorem{lem}[theo]{Lemma}
\theoremstyle{definition}
\newtheorem{defi}[theo]{Definition}
\newtheorem{exam}[theo]{Example}
\newtheorem{rem}[theo]{Remark}
\begin{document}
\title{The Deligne pairing and a functorial Riemann Roch theorem in positive characteristic}
\author{\\Quan Xu \footnote {Email to: qxu@math.tsinghua.edu.cn.  \ Supported by post-doctorate fellowship in Yau Mathematical Sciences Center, Tsinghua University.} 
}
\maketitle

\begin{abstract}
In this paper, we prove the functorial Riemann-Roch theorem in positive characteristic for a smooth and projective morphism with any relative dimension. In the case of relative dimension $1$, we have given an analogue with Deligne's functorial Riemann Roch theorem in previous author's paper. For any relative dimension, our result can deduce an analogue to the Knudsen-Mumford extension. The present result is a generalization, which mainly originated from the extended Deligne pairing by S. Zhang and the Adams Riemann Roch theorem in positive characteristic by R. Pink and D. R\"{o}ssler.
\end{abstract}

\section{Introduction}
For a scheme $X$, one has Chern character map $\text{ch}: K_{0}(X)\to A(X)_{\mathbb{Q}}$ from (Grothendieck ) K-group to (the localized ) Chow Group
of $X$.
Furthermore, given a morphism of schemes $f:X\to Y$, we have  $f_{*}:  K_{0}(X)\to  K_{0}(Y)$ from K-group of $X$ to K-group of $Y$ and
$f_{!}: A(X)\to A(Y)$ from Chow group of $X$ to Chow group of $Y$.
 An natural question is to ask : whether Chern character map $\text{ch}$ can have a commutativity with the maps $f_{*}, f_{!}$ for the given morphism $f$ , that is, $\text{ch}(f_{*}(E))=f_{!}(\text{ch}(E))$ for 
 any $E\in K_{0}(X)$ ? These concepts including Grothendieck $K$-group, Chern character and Chow group, $f_{*}, f_{!}$ can be found in 
 lots of reference, for instance, \cite{Ful}. On one hand,  Grothendieck-Riemann-Roch theorem(GRR, in abrreviation) answers the question: the commutativity 
 does not hold, but the difference is $\text{Todd}$ classes.\\
(GRR, \cite{Berth}) Let $f:X\to Y$ be a proper morphism of regular schemes. Then for  any $E\in K_{0}(X)$, the equality 
 $$\text{ch}(f_{*}E)\cdot \text{td}(T_{Y})=f_{!}(\text{ch}(E)\cdot\text{td}(T_{X}))$$ holds
 in $A(Y)_{\mathbb{Q}}$.
 By taking the degree one under the both sides of the equality in GRR,  one has a special case of GRR ( \cite{FL}):
$$\text{c}_{1}(Rf_{*}E)=f_{!}(\text{ch}(E)Td(\mathrm{T}_{f}))_{(1)}$$

 In order to clarify the functoriality of Riemann-Roch theorem, we need to have a close look to the Grothendieck K-group and the Chow group.
Roughly speaking, the $K$-group is a free abelian group generated by isomorphic classes of all coherent sheaves (or locally free and coherent sheaves) over a scheme modulo out an equivalent relation (the class of the exact sequence of coherent sheaves). The  Chow group $A(X)=\oplus A^{i}(X)$ of a scheme $X$ is a direct sum of $A^{i}(X)$ which is the free abelian group generated by all closed sub-schemes of co-dimension $i$ modulo out linear equivalence (see \cite{Ful}). Therefore, we can not obtain the information of equivalence relations from the equality in GRR theorem. 
 
 A question proposed by P. Deligne is whether or not  we can have an isomorphism for a given morphism of schemes and from the isomorphism, can extract more information (such as equivalence relations) besides the equality in  GRR theorem? To be precise, exists there a categorical refinement of  GRR theorem? In the categorical refinement of GRR theorem, the Grothendieck $K$-group and Chow group can have categorical replacement such that there is a functor between two categories and the equality in GRR theorem can be refined by functorial isomorphism in a suitable category. The kind of question is just referred to the functorial Riemann-Roch theorem. 
 
 From the special case of GRR above, P.Deligne gave a certain answer which is Deligne's functorial Riemann-Roch theorem:
 
 (\cite{Del}, Theorem 9.9): Let $f:C\rightarrow S$ be a smooth family of proper curves and $L$ be a line bundle over $C$. There exists a unique, up to a sign, functorial isomorphism of line bundles
\begin{align}
(\det &Rf_{*}L)^{\otimes 18 }\notag\\
&\cong (\det  Rf_{*}\mathcal{O})^{\otimes 18 }\otimes (\det  Rf_{*}(L^{\otimes 2}\otimes 
\omega^{-1}))^{\otimes 6}\otimes  (\det  Rf_{*}(L\otimes \omega^{-1}))^{\otimes (-6)}.\notag
\end{align}
in the Picard category of $S$, where these notations are explained as follows

(1a) $\det$ is the extended determinant functor (see Sect. 2.2);

(1b) $Rf_{*}L$ is a complex which consists of all $R^{i}f_{*}L$ with $i\geqslant 0$;

(1c) $\omega$ is a canonical sheaf of the morphism $f$.\\
In fact, in Deligne's theorem, $K_{0}$-group is replaced by the virtual category (see Def.\ref{vircon}), but there is no need of a categorical replacement of Chow group. The functor that we are searching for is the determinant functor (see Sect. 2.2).  The isomorphism in Deligne's functorial Riemann-Roch theorem was also suggested by the special case of GRR above which says that the classes of line bundles
are the same in the Picard group ( \cite{Thom}). Observe that Deligne's functorial Riemann Roch theorem depends on the Mumford isomorphism. Therefore, we recall the Mumford isomorphism:

(Mumford isomorphism) Let $f:C\rightarrow S$ be a flat local complete intersection generically smooth proper morphism with geometrically connected fibers of dimension $1$, with $S$ any connected normal Noetherian locally factorial scheme.  Then for the canonical sheaf $\omega$ of the morphism $f$, we have a canonical isomorphism 
  $\det Rf_{*}(\omega^{n})\cong(\det Rf_{*}(\omega))^{6n^{2}-6n+1}$ for any integer $n\geqslant 0$.

On another hand, in \cite{Berth}, besides Grothendieck Riemann-Roch theorem, by introducing the Adams operation, Grothendieck also proved the Adams-Riemann-Roch theorem: 
let $f:X\rightarrow Y$ be projective local complete intersection morphism of  dimension $r$ and $Y$ a quasi-compact scheme with an ample invertible sheaf,
then the equality $$\psi^{k}(R^{\bullet}f_{*}(E))=R^{\bullet}f_{*}(\theta ^{k}(\Omega_{f})^{-1}\otimes\psi^{k}(E))$$ for 
 any integer $k\geq 2$ holds in $K_{0}(Y)\otimes \mathbb{Z}[\frac{1}{k}]$. The notations are given as follows:

(a) The symbol $\psi^{k}$ is the $k$-th Adams operation and $\theta ^{k}$ is the $k$-th Bott class operation (see Sect. 3.2 and 3.3);

(b) For a vector bundle $E$, $R^{\bullet}f_{*}(E)=\sum_{i\geq 0}(-1)^{i}R^{i}f_{*}(E)$ where $R^{i}f_{*}$ is the higher direct image functor of the push-forward $f_{*}$ (see Sect. 2.1);

(c ) For a quasi-compact scheme $Y$, $K_{0}(Y)$ is the Grothendieck group of locally free coherent sheaves of $\mathcal{O}_{Y}$-modules (see Sect. 2.1);

(d) The symbol $\Omega_{f}$ is the relative differentials of the morphism $f$. When $f$ is a smooth and projective morphism, $\Omega_{f}$ is locally free sheaf (see \cite{Hart}).

Under a suitable condition (for instance, $f$ is a projective and smooth morphism), the GRR theorem and the Adams-Riemann-Roch theorem are equivalent (see \cite{FL}).  The equivalence of two theorems partially explains why the Chow group (or a categorical replacement of Chow group) does not appear in Deligne's functorial Riemann-Roch theorem. Originally, there is no restriction to the characteristic of schemes in the course of statement and proof of theorems. For a projective and local complete intersection morphism (see \cite{FL}, Pag.86), a general strategy of proving these theorems is to verify the theorem for the closed immersion and smooth projection, respectively. And then the theorems is valid in their composition of the closed immersion and smooth projection. Usually, the technique of the deformation to the normal cone (see \cite{Man} or \cite{FL}) will be used, which is not an easy work.  

In \cite{Pi}, in the setting of the positive characteristic, R. Pink and D. R\"{o}ssler proved the following version of the Adams-Riemann-Roch theorem:
\\Let $f:X\rightarrow Y$ be projective and smooth of relative dimension $r$, where
$Y$ is a quasi-compact scheme of characteristic $p>0$ and carries an ample invertible sheaf. Then the following equality
$$\psi^{p}(R^{\bullet}f_{*}(E))=R^{\bullet}f_{*}(\theta ^{p}(\Omega_{f})^{-1}\otimes\psi^{p}(E))~~~~~(*)$$
holds in $K_{0}(Y)[\frac{1}{p}]:=K_{0}(Y)\otimes_{\mathbb{Z}}\mathbb{Z}[\frac{1}{p}].$

In their proof, Pink and R\"{o}ssler did not use decomposition and composition of projective morphism, and the deformation to the normal cone, either. Instead,  by considering the relative Frobenius morphism and absolute Frobenius morphism,  they prove a version of Adams-Riemann-Roch theorem by further construction of the Bott class, which completely embodies the advantage of the positive characteristic.

In this paper, by Deligne's functorial Riemann-Roch theorem and Pink and R\"{o}ssler's Adams-Riemann-Roch theorem in positive characteristic, we will prove a  functorial version of Adams-Riemann-Roch theorem in positive characteristic (see Thm\ref{mrs}) for a projective and smooth morphism of any relative dimension, which is our main result. For any relative dimension, our theorem also deduces an analogue with the Knudsen-Mumford extension in positive characteristic (see Cor. \ref{kme}).
 
The structure of the paper is  as follows. In the section 2, we provide the necessary materials for other sections, including Grothendieck $K$-groups, the Picard category, virtual category in section 2.1. In section 2.2 , according to the original definition of the determinant functor which is given Knudsen and Mumford  \cite{Kund}, we aim at defining specifically the determinant functor, from the exact category to the Picard category, which factorizes through the virtual category (Def. \ref{vdk}) . The definition is essentially important, which makes our theorem functorial. In section 2.3, we will introduce the extended Deligne pairing Defined by S. Zhang. Then the important property of the Deligne pairing is Prop. \ref{d,p} which is an analogue with original Deligne pairing. Its direct corollary (see Cor. \ref{trs}) will be used in the proof of our theorem.
 
In Section 3, the Adams-Riemann-Roch theorem will be stated including all  necessary ingredients. Moreover, we are more concerned with its proof in the context of positive characteristic since the ideas in the proof will be employed in our results.  
 
In Section 4, our main results are shown, including a theorem and two corollaries. Theorem \ref{mrs} can be viewed as the functorial Adams-Riemann-Roch in positive characteristic for a projective and smooth morphism of any relative dimension. In the case of relative dimension $1$, Cor.\ref{functor} completely agrees with Deligne's functorial Riemann-Roch theorem and Mumford isomorphism when the characteristic is 2. As another application, Cor.\ref{kme} is also an analogue with the Knudsen-Mumford extension in positive characteristic.
 \section{Preliminaries}
From now on,  whenever a scheme is mentioned in the paper,  it means that it is always a quasi-compact scheme, unless
we use a different assumption for schemes.
\subsection{ Grothendieck groups and the virtual category}
Let $X$ be a scheme. the first ingredient in Riemann-Roch theorem is Grothendieck $K$-groups.
Usually, $K_{0}(X)$ and $K_{0}^{'}(X)$ are denoted the Grothendieck groups of locally free shaves and coherent sheaves on a scheme $X$, respectively.
The following are basic facts about Grothendieck groups:

(1) The tensor product of $\mathcal{O}_{X}$-modules makes the group $K_{0}(X)$ into a commutative unitary ring and the inverse image of locally free sheaves under
any morphism of schemes $X^{'}\rightarrow X$ induces a morphism of unitary rings $K_{0}(X)\rightarrow K_{0}(X^{'})$ (see \cite{Man}, \S 1);

(2) The obvious group morphism $K_{0}(X)\rightarrow K_{0}^{'}(X)$ is an isomorphism if $X$ is regular and carries an ample invertible sheaf (see \cite{Man}, Thm. 1.9);

(3) Let $f:X\rightarrow Y$ be a projective local complete intersection morphism of schemes (A morphism $f 
: X\rightarrow Y$ is called a  local complete intersection morphism if $f $  is a composition of morphisms as $X\rightarrow \text{P} \rightarrow Y$ where the first morphism is a regular embedding and the second is a smooth morphism. See \cite{Ful}  or \cite{FL}) and $Y$ carries an ample invertible sheaf. There is
a unique group morphism $\text{R}^{\bullet}f_{*}:K_{0}(X)\rightarrow K_{0}(Y)$ which sends the class of a locally free coherent sheaf $E$ on $X$ to the class of the strictly perfect complex (The strictly perfect complex will be defined in Sect. 2.2) $\text{R}^{\bullet}f_{*}E$ in $K_{0}(Y)$, where $\text{R}^{\bullet}f_{*}E$ is defined 
to be $\sum_{i\geq 0}(-1)^{i}\text{R}^{i}f_{*}E$ and $\text{R}^{i}f_{*}E$ is an element in $K_{0}^{'}(Y)$ and is viewed as an element in $K_{0}(Y)$ under $K_{0}^{'}(Y)\cong K_{0}(Y)$ in the sense of (2) above (see \cite{Berth}, IV, 2.12). 

In \cite{Del}, Deligne defined a categorical refinement of the Grothendieck groups, which are referred to  the additive category and the exact category and the abelian category. These definitions can be found in a lot references, for instance, in \cite{Quill}. In order to give the definition of the virtual category, we shall need the definition of a groupoid, especially of a specific groupoid called the Picard groupoid (see \cite{Del}, \S 4).

Picard category is a groupoid $P$ (a non-empty category in which all morphisms are isomorphisms  ) with a  functor  $\oplus: P\times P\rightarrow P$ and an associativity constraint 
for the functor $\oplus$  and an commutative constraint such that two kinds of constraints are compatible. Besides, for any object $Y$ in $P$, these functors $X\rightarrow X\oplus Y$ and $X\rightarrow Y\oplus X$ are auto-equivalence of $P$. (See \cite{Jsm} for two kinds of constraint and their compatibility, also \S 4 in \cite{Del} for precise definition). The Picard category we describe is essentially a commutative Picard category in \S 4,
\cite{Del}.
\begin{exam}\label{lbd}
Let $X$ be a scheme. We denote by $\mathscr{P}_{X}$ the category of graded invertible 
$\mathcal{O}_{X}$-modules. An object of $\mathscr{P}_{X}$ is a pair $(L,\alpha)$ where $L$ is an invertible $\mathcal{O}_{X}$-module and $\alpha$
is a continuous function: $$\alpha: X\rightarrow \mathbb{Z}.$$ 

A homomorphism $h:(L,\alpha)\rightarrow (M,\beta)$ is a homomorphism $L\rightarrow M$ of $\mathcal{O}_{X}$-modules such that for each $x\in X$  we have:
$$\alpha(x)\neq\beta(x)\Rightarrow h_{x}=0.$$

We denote by $\mathscr{P}is_{X}$ the subcategory of $\mathscr{P}_{X}$ whose morphisms are all isomorphism.
The tensor product of two objects in $\mathscr{P}_{X}$ is given by: $$(L,\alpha)\otimes(M,\beta)=(L\otimes M, \alpha+\beta).$$
For each pair of objects $(L,\alpha), (M,\beta)$ in $\mathscr{P}_{X}$  we have an isomorphism:
$$\xymatrix{\psi_{(L,\alpha), (M,\beta)}: (L,\alpha)\otimes(M,\beta)\ar[r]^(.60){\sim}&(M,\beta)\otimes(L,\alpha)  }$$
defined as follows: If $l\in L_{x}$ and $m\in M_{x}$ then 
$$\psi(l\otimes m)=(-1)^{\alpha(x)+\beta(y)}\cdot m\otimes l.$$\\
Clearly: $$\psi_{(M,\beta),(L,\alpha)}\cdot\psi_{(L,\alpha), (M,\beta)}=1_{(L,\alpha)\otimes(M,\beta)}$$\\
We denote by $1$ the object $(\mathcal{O}_{X},0)$. A right inverse of an object $(L,\alpha)$ in $\mathscr{P}_{X}$ will be an object $(L^{'},\alpha^{'})$
together with an isomorphism $$\xymatrix{\delta: (L,\alpha)\otimes(L^{'},\alpha^{'})\ar[r]^(.70){\sim}& 1}$$\\
Of course $\alpha^{'}=-\alpha$.
A right inverse will be considered as a left inverse via:
$$\xymatrix{\delta: (L^{'},\alpha^{'})\otimes(L,\alpha)\ar[r]_(.55){\sim}^(.55){\psi}& (L,\alpha)\otimes(L^{'},\alpha^{'})\ar[r]_(.70){\sim}^(.70){\delta}&1 }.$$
According to the definition of the Picard category, further verification implies that $\mathscr{P}is_{X}$ is a Picard category.
\end{exam}
Now, we can give the definition of Deligne's virtual category. 
By an admissible filtration in an exact category we mean a finite sequence of admissible monomorphisms $0=A^{0}\rightarrowtail A^{1}\rightarrowtail
\cdots \rightarrowtail A^{n} = C.$
\begin{defi}\label{vircon}(see \cite{Del}, Pag. 115)
The virtual category $V(\mathcal{C})$ of an exact category $\mathcal{C}$ is
a Picard category, together with a functor $\{~\} : (\mathcal{C}, iso)\rightarrow V(\mathcal{C})$ (Here, the
first category is the subcategory of $\mathcal{C}$ consisting of the same objects and the
morphisms are the isomorphisms of $\mathcal{C}$.), with the following universal property:\\
Suppose we have a functor $[~] : (\mathcal{C}, iso)\rightarrow P$ where $P$ is a Picard category,
equipped with (a) and (b) satisfying the axioms © and (d) below.

(a) Additivity on exact sequences, i.e., for an exact sequence $A \rightarrow B \rightarrow C $ ($A\rightarrow B$ is a admissible monomorphism and $B\rightarrow C$ is a admissible epimorphism), 
we have an isomorphism $[B]\cong[A]\oplus [C]$, functorial with respect to isomorphisms of exact sequences.

(b) A zero-object of $C$ is isomorphically mapped to a zero-object in $P$ (According to the definition of  the Picard category, it implies the existence of the unit object which is also called zero-object. See \cite{Del}, \S 4.1.).

(c ) The functor [ ]  is compatible with admissible filtrations, is compatible with admissible filtrations, i.e., for an admissible filtration $C\supset B\supset A\supset 0$,
the diagram of isomorphisms from (a) 
$$\xymatrix{[C]\ar[rr]\ar[d]&    & [A]\oplus [C/A]\ar[d]\\
    [B]\oplus[C/B]\ar[rr] & & [A]\oplus[B/A]\oplus[C/B]                  }$$
 is commutative.

(d) If $f: A\rightarrow B$ is an isomorphism and $\sum$ is the exact sequence $0\rightarrow A\rightarrow B$ (resp. $A\rightarrow B\rightarrow 0$ ), then $[f]$ (resp. $[f]^{-1}$) 
is the composition
$$\xymatrix{[A]\ar[r]_(.35){\sum}&[0]\oplus[B]\ar[r]_(.6){(b)}&[B]   }$$
$$(resp. \xymatrix{[B]\ar[r]_(.35){\sum}&[A]\oplus[0]\ar[r]_(.6){(b)}&[A]})$$
where (b) in the diagram above means that the morphism is from (b).\\
Then the conclusion is that the functor $[~] :(\mathcal{C}, iso) \rightarrow P$ factors uniquely up to
unique isomorphism through $(\mathcal{C}, iso)\rightarrow V(\mathcal{C})$. 
\end{defi}
Roughly speaking, for an exact category $\mathcal{C}$, $V(\mathcal{C})$ is a universal Picard category with a functor $[~]$ satisfying given properties. In practice, the functor $[~]$ usually can be chosen as the determinant functor we will 
define in the next subsection.
The definition on virtual category above is an abstractly algebraic description. In \cite{Del}, Deligne also provided a topological definition for the virtual category  of a small exact category.
\begin{theo}\label{v,k}
Let $\mathcal{C}$ be a exact category and $\mathcal{B}Q\mathcal{C}$ is the geometrical realization of the Quillen Q-construction of $\mathcal{C}$ (See \S 1, \cite{Quill} ). Objects are loops in $\mathcal{B}Q\mathcal{C}$ around a fixed zero-point wich is a zero object in $\mathcal{C}$ , and morphisms are homotopy-classes of homotopies
of loops.The addition is the usual addition of loops. Then $\mathcal{B}Q\mathcal{C}$ is the virtual category $V(\mathcal{C})$of $\mathcal{C}$.
\end{theo}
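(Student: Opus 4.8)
The plan is to verify directly that the category $\mathcal{P}$ of loops and homotopy classes of homotopies of loops in $\mathcal{B}Q\mathcal{C}$, together with a natural functor $\{~\} : (\mathcal{C}, iso) \to \mathcal{P}$, enjoys the universal property of Definition \ref{vircon}; uniqueness of the virtual category then identifies $\mathcal{P}$ with $V(\mathcal{C})$. First I would check that $\mathcal{P}$ is a Picard category. Its objects are the loops based at the zero object $0$, its morphisms are homotopy classes of homotopies of loops rel $0$, and the monoidal operation is the usual concatenation of loops, with unit the constant loop at $0$ and inverse given by the reversed loop. Because the direct sum $\oplus$ on $\mathcal{C}$ makes $Q\mathcal{C}$ a symmetric monoidal category, $\mathcal{B}Q\mathcal{C}$ carries a coherently homotopy-commutative $H$-space structure; an Eckmann--Hilton argument then supplies the associativity and commutativity constraints and their compatibility, so that $\mathcal{P}$ is indeed a commutative Picard category. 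By Quillen's computation (\cite{Quill}) one has $\pi_{1}(\mathcal{B}Q\mathcal{C}, 0) \cong K_{0}(\mathcal{C})$ and $\pi_{2}(\mathcal{B}Q\mathcal{C}, 0) \cong K_{1}(\mathcal{C})$, the first higher $K$-group in Quillen's sense, whence $\pi_{0}(\mathcal{P}) \cong K_{0}(\mathcal{C})$ and $\mathrm{Aut}_{\mathcal{P}}(1) \cong K_{1}(\mathcal{C})$; this records that $\mathcal{P}$ has the invariants one expects of the virtual category.

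Next I would construct the functor $\{~\}$ and check the axioms (a)--(d). To an object $A$ I attach the loop $l_{A}$ obtained by traversing the edge of $\mathcal{B}Q\mathcal{C}$ attached to the admissible monomorphism $0 \rightarrowtail A$ and returning along the edge attached to the admissible epimorphism $A \twoheadrightarrow 0$; this is exactly the loop representing $[A]$ under Quillen's isomorphism $\pi_{1}(\mathcal{B}Q\mathcal{C},0)\cong K_{0}(\mathcal{C})$. The axioms are then read off from the low-dimensional simplices of the $Q$-construction: a short exact sequence $0 \to A \to B \to C \to 0$ determines a $2$-simplex of $\mathcal{B}Q\mathcal{C}$ whose faces involve $l_{A}, l_{B}$ and $l_{C}$, and this simplex is precisely a homotopy $l_{B} \simeq l_{A} \cdot l_{C}$, giving the additivity isomorphism $\{B\} \cong \{A\}\oplus\{C\}$ of (a), functorial in isomorphisms of exact sequences; the zero object gives the constant loop, which is (b); the compatibility (c) for an admissible filtration $C \supset B \supset A \supset 0$ follows from the $3$-simplex assembled from the three associated short exact sequences, whose boundary is exactly the commuting square of (c); and (d) is immediate from the definition of $\{~\}$ on isomorphisms.

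Finally I would establish the universal property, which is where the real work lies. Given a Picard category $P$ and a functor $[~] : (\mathcal{C}, iso) \to P$ carrying the data (a), (b) and satisfying (c), (d), I must produce a unique factorization through $\mathcal{P}$. The mechanism is that such a package is the same datum as a based map $\mathcal{B}Q\mathcal{C} \to BP$ into a classifying space of $P$ (equivalently, an $H$-map $\Omega\mathcal{B}Q\mathcal{C} \to |P|$): the objects $[A]$ prescribe the images of the $1$-cells, the additivity isomorphisms of (a) prescribe the images of the exact-sequence $2$-cells, and the coherences (c), (d) are exactly what is needed for the relations among the $3$-cells to be respected, so that the map is well defined with no further obstruction. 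Looping this map down and passing to fundamental groupoids yields the functor $\mathcal{P} = \Pi_{1}(\Omega\mathcal{B}Q\mathcal{C}) \to P$, and its uniqueness up to unique isomorphism follows because $\mathcal{P}$ is generated, as a Picard category, by the loops $l_{A}$ together with their additivity isomorphisms, all of whose images are already pinned down by $[~]$. The main obstacle is precisely this comparison: one must show that the categorical conditions (a)--(d) capture the entire homotopical content of $\mathcal{B}Q\mathcal{C}$ through the $1$-truncation of its loop space---equivalently, that the $Q$-construction is the universal additive invariant not merely on the groups $K_{0}$ and $K_{1}$ but already at the level of the Picard groupoid, including its $k$-invariant---so that the factorization both exists and is unique.
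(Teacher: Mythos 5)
The paper itself contains no proof of this theorem: its ``proof'' is the single line ``See \cite{Del}, Pag.~114'', so there is nothing internal to compare your argument against. What you have written is, in outline, the argument of the cited source: identify the loop category of $\mathcal{B}Q\mathcal{C}$ as a commutative Picard category (concatenation plus an Eckmann--Hilton/$H$-space argument using $\oplus$ on $Q\mathcal{C}$), record $\pi_{1}(\mathcal{B}Q\mathcal{C})\cong K_{0}(\mathcal{C})$ and $\pi_{2}(\mathcal{B}Q\mathcal{C})\cong K_{1}(\mathcal{C})$, send $A$ to the loop $q_{A}^{-1}\circ i_{A}$, read additivity off the $2$-simplex of a short exact sequence and the filtration compatibility off a $3$-simplex, and then verify the universal property by translating a package $([~],\mathrm{(a)}\text{--}\mathrm{(d)})$ into a based map $\mathcal{B}Q\mathcal{C}\to BP$. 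All of that is correct and is the right strategy.

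The genuine gap is the step you yourself flag as ``the main obstacle'' and then assert rather than establish: that the axioms (a)--(d) are \emph{exactly} a presentation of the fundamental Picard groupoid of $\Omega\mathcal{B}Q\mathcal{C}$, so that the map $\mathcal{B}Q\mathcal{C}\to BP$ both exists and is unique. Since $BP$ is a homotopy $2$-type, a based map out of $\mathcal{B}Q\mathcal{C}$ is determined by its behaviour on the $3$-skeleton together with the vanishing of an obstruction on the $4$-cells; one must therefore exhibit a cell structure on $\mathcal{B}Q\mathcal{C}$ (or on the nerve of $Q\mathcal{C}$) in which the $1$-cells are generated by the canonical morphisms $i_{A}$, $q_{A}$, the $2$-cells by composable pairs (short exact sequences and isomorphisms), the $3$-cells by two-step admissible filtrations, and then check that the data (a) and (b) define the map on $2$-cells, that (c) and (d) make it extend over the $3$-cells, and that \emph{no further condition} is imposed by the $4$-cells (three-step filtrations, and the prisms encoding functoriality of (a) under isomorphisms of exact sequences). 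Your sentence ``the coherences (c), (d) are exactly what is needed $\ldots$ with no further obstruction'' is precisely the content of the theorem, not a proof of it; as written, the existence half of the universal property (and hence the identification with $V(\mathcal{C})$) is not established. If you do not want to carry out the cell-by-cell verification, the honest course is to do what the paper does and cite Deligne's \S 4 directly for this step.
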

\begin{proof}
See \cite{Del}, Pag. 114.
\end{proof}

\subsection{The determinant functor}\label{sdf}

In this subsection, we will consider the determinant functor and mainly consult \cite{Kund}. In loc. cit., the determinant functor can be defined in several backgrounds.
But the case we are most interested in  is the determinant functor from some subcategory of derived category to the subcategory of the category $\mathscr{P}_{X}$ of graded line bundles. 

In the following,  we denote by $\mathscr{C}_{X}$ the category  of locally free of finite type $\mathcal{O}_{X}$-modules for a scheme $X$.
 
\begin{defi}
If $E\in \text{ob}(\mathscr{C}_{X})$, we define: $\det ^{*}(F)=(\wedge^{max}F, \text{rank}F)$ \\ (where $(\wedge^{max}F)_{x}=\wedge^{\text{rank}F_{x}}F_{x}$).
\end{defi}

For every short exact sequence of objects in $\mathscr{C}_{X}$ 
$$\xymatrix{0\ar[r]&F^{'}\ar[r]^{\alpha}& F\ar[r]^{\beta}&F^{''}\ar[r]&0                 }$$ we have an isomorphism,
$$\xymatrix{i^{*}(\alpha,\beta): \det^{*}F^{'}\otimes \det^{*}F^{''}\ar[r]^(.70){\sim}&\det^{*} F   },$$ such that locally,
$$i^{*}(\alpha,\beta)((e_{1}\wedge\ldots\wedge e_{l})\otimes(\beta f_{1}\wedge\ldots\beta f_{s}))=\alpha e_{1}\wedge\ldots\alpha e_{l}\wedge f_{1}\wedge\ldots f_{s}$$
for $e_{i}\in\Gamma (U,F^{'})$ and $f_{j}\in\Gamma (U,F^{''})$.

\begin{defi}
 If $F^{i}$ is an indexed object of $\mathscr{C}_{X}$ we define:
\[
\det(F^{i})=
\begin{cases}
\det^{*}(F^{i}) &\text{if $i$ even};\\
 \det^{*}(F^{i})^{-1}    &\text{if $i$ odd}.

\end{cases}
\]
If $$\xymatrix{ 0\ar[r]& F^{i^{'}}\ar[r]^{\alpha^{i}}&F^{i}\ar[r]^{\beta^{i}}&F^{i^{''}}\ar[r]&0} $$
is an indexed short exact sequence of objects in $\mathscr{C}_{X}$, we define
\[
i(\alpha^{i},\beta^{i})=
\begin{cases}
i^{*}(\alpha^{i},\beta^{i})&\text{if $i$ even};\\
 i^{*}(\alpha^{i},\beta^{i})^{-1}   &\text{if $i$ odd}.

\end{cases}
\]
Usually, for a object $F$ in $\mathscr{C}_{X}$, we view the object as the indexed object by $0$, i.e., $\det(F)=\det^{*}(F)$.

\end{defi}
We also denote by $\mathscr{C}^{\cdot}_{X}$ the category of the bounded complex of objects in $\mathscr{C}_{X}$ over a scheme $X$.
\begin{defi}
If $F^{\cdot}$ is an object of $\mathscr{C}^{\cdot}_{X}$, we define $$\det(F^{\cdot})=\cdots\otimes\det(F^{i+1})\otimes\det(F^{i})\otimes\det(F^{i-1})\otimes\cdots$$
Furthermore, if $$\xymatrix{ 0\ar[r]& F^{\cdot'}\ar[r]^{\alpha}&F^{\cdot}\ar[r]^{\beta}&F^{\cdot''}\ar[r]&0} $$
is a short exact sequence of objects in $\mathscr{C}^{\cdot}_{X}$ we define 
$$\xymatrix{i(\alpha,\beta):\det(F^{\cdot'})\otimes \det(F^{\cdot''})\ar[r]^(.70){\sim}& \det(F^{\cdot})  }$$
to be the composite:
$$\det(F^{\cdot'})\otimes \det(F^{\cdot''})=\cdots\otimes\det(F^{i'})\otimes\det(F^{i-1'})\otimes\cdots$$
$$\xymatrix{\otimes\det(F^{i''})\otimes\det(F^{i-1''})\otimes\cdots\ar[r]^(.55){\sim}&\cdots\otimes\det(F^{i'})\otimes\det(F^{i''}) }$$
$$\xymatrix @C=0.7in{\otimes\det(F^{i-1'})\otimes\det(F^{i-1''})\otimes\cdots\ar[r]^(.65){\otimes_{i}i(\alpha^{i},\beta^{i})}_(.65){\sim}&\cdots\otimes\det(F^{i})  }$$
$$\otimes\det(F^{i-1})\otimes\cdots=\det(F^{\cdot}) .$$
\end{defi}
In \cite{Kund}, it is proved that there is one and, up to canonical isomorphism, only one determinant $(f,i)$ from $\mathscr{C}is_{X}$ (resp. $\mathscr{C}^{\cdot}is_{X}$) to $\mathscr{P}is_{X}$, which we write $(\det, i)$, 
where $\mathscr{C}is_{X}$ (resp. $\mathscr{C}^{\cdot}is_{X}$) is the category with same objects from $\mathscr{C}_{X}$ (resp. $\mathscr{C}^{\cdot}_{X}$) and the morphisms being
all isomorphisms (resp. quasi-isomorphisms). In case of repeating, we do not give the definitions of the determinant functor from from $\mathscr{C}is_{X}$ (resp. $\mathscr{C}^{\cdot}is_{X}$) to $\mathscr{P}is_{X}$, because the definitions are completely similar
to the following definition of the extended functor. For the precise definitions and proofs, see \cite{Kund}, Pag. 21-30.

In order to extend the determinant functor to the derived category in \cite{Kund}, we need to recall the definitions about the perfect complex and the strictly perfect complex:

\begin{defi}\label{exd} In \cite{Berth}, a perfect complex $\mathcal{F}^{\cdot}$ on a scheme $X$ means a complex of $\mathcal{O}_{X}-$modules (not necessarily quasi-coherent) such
that locally on $X$ there exists a bounded complex $\mathcal{G}^{\cdot}$ of free $\mathcal{O}_{X}-$modules of finite type and a quasi-isomorphism:
$$\mathcal{G}^{\cdot}\rightarrow\mathcal{F}^{\cdot}\mid_{U}$$ for any open subset $U$ of a covering of $X$. 

A strictly perfect complex $\mathcal{F}^{\cdot}$ on a scheme $X$ 
is a bounded complex of locally free $\mathcal{O}_{X}-$modules of finite type.
\end{defi}

 In other words, a perfect complex is locally quasi-isomorphic to a strictly perfect complex.
We denote by $\text{Parf}_{X}$ the full subcategory of $\text{D(Mod}X)$ (the derived category of bounded complexes of  sheaves of $\mathcal{O}_{X}-$modules) whose objects are perfect complexes and denote by $\text{Parf-is}_{X}$ the subcategory of
$\text{D(Mod}X)$  whose objects are perfect complexes and morphisms are only quasi-isomorphisms.
\begin{defi}\label{ex,de}(see \cite{Kund}, Pag. 40)
An extended determinant functor $(f, i)$ from  $\text{Parf-is}$ to $\mathscr{P}is$ consist of the following data:

I) For every scheme $X$, a functor
$$f_{X}: \text{Parf-is}_{X}\rightarrow\mathscr{P}is_{X}$$ such that $f_{X}(0)=1$.

II) For every short exact sequence of complexes $$ \xymatrix{ 0\ar[r]& F\ar[r]^{\alpha}&G\ar[r]^{\beta}&H\ar[r]&0} $$ in $\text{Parf-is}_{X}$,
 we have an isomorphism:
$$i_{X}(\alpha,\beta):\xymatrix{f_{X}(F)\otimes f_{X}(H)\ar[r]^(.65){\sim}&f_{X}(G)}$$ such that for the particular short exact sequences
$$\xymatrix {0\ar[r]&H\ar@{=}[r]&H\ar[r]&0\ar[r]&0}$$
and 
$$\xymatrix {0\ar[r]&0\ar[r]&H\ar@{=}[r]&H\ar[r]&0}$$
we have : $i_{X}(1,0)=i_{X}(0,1)=1_{f_{X}(H)}$.\\
We require that:

i) Given an isomorphism of short exact sequences of complexes
$$\xymatrix{0\ar[r]&F\ar[r]^{\alpha}\ar[d]^{u}&G\ar[r]^{\beta}\ar[d]^{v}&H\ar[r]\ar[d]^{w}&0 \\
0\ar[r]&F^{'}\ar[r]^{\alpha ^{'}}&G^{'}\ar[r]^{\beta ^{'}}&H^{'}\ar[r]&0  }$$
the diagram 
$$\xymatrix{f_{X}(F)\otimes f_{X}(H)\ar[r]^(.65){i_{X}(\alpha,\beta)}_(.65){\sim}\ar[d]^{f(u)\otimes f_{X}(w)}_{\wr}&f(G)\ar[d]^{f_{X}(v)}_{\wr}\\
f(F^{'})\otimes f(H^{'})\ar[r]_(.65){i_{X}(\alpha^{'},\beta^{'})}^(.65){\sim} &  f(G^{'})      }$$
commutes.

ii) Given a exact sequence of short exact sequences of complexes, i.e., a commutative diagram
$$\xymatrix{    &   0\ar[d]&0\ar[d]&0  \ar[d]\\
0\ar[r]&F\ar[r]^{\alpha}\ar[d]^{u}&G \ar[r]^{\beta}\ar[d]^{u^{'}}&H\ar[r]\ar[d]^{u^{''}}&0 \\
0\ar[r]&F^{'}\ar[r]^{\alpha^{'}}\ar[d]^{v}&G^{'}\ar[r]^{\beta^{'}}\ar[d]^{v^{'}}&H^{'}\ar[r]\ar[d]^{v ^{''}}&0 \\
0\ar[r]&F^{''}\ar[r]^{\alpha^{''}}\ar[d]&G^{''} \ar[r]^{\beta^{''}}\ar[d]&H^{''}\ar[r]\ar[d]&0 \\
            &   0  &0  &0       }$$
the diagram:
$$\xymatrix{f_{X}(F)\otimes f_{X}(H)\otimes f_{X}(F^{\cdot''})\otimes f_{X}(H^{''})\ar[rrr]^(.62){i_{X}(\alpha,\beta)\otimes i_{X}(\alpha^{''},\beta^{''})}_(.62){\sim}\ar[d]^{i_{X}(u,v)\otimes i_{X}(u^{''},v^{''})}_{\wr}&  & &f_{X}(F^{\cdot})\otimes f_{X}(H^{\cdot})\ar[d]^{i_{X}(u^{'}, v^{'})}_{\wr}\\
f_{X}(F^{'})\otimes f_{X}(H^{'})\ar[rrr]_(.65){i_{X}(\alpha^{'},\beta^{'})}^(.65){\sim} & &  &f_{X}(G^{'})      }$$
commutes.

iii) $f$ and $i$ commutes with base change. More precisely, this means: \\
For every morphism of schemes 
$$g: X\rightarrow Y$$
we have an isomorphism 
$$\eta(g):  \xymatrix{f_{X}\cdot \text{L}g^{*}\ar[r]^{\sim}&g^{*}f_{Y}}$$ such that for every short exact sequence of complexes
$$\xymatrix{0\ar[r]&F^{\cdot}\ar[r]^{u}&G^{\cdot}\ar[r]^{v}&H^{\cdot}\ar[r]&0}$$
the diagram:
$$\xymatrix{f_{X}(\text{L}g^{*}F^{\cdot})\otimes f_{X}(\text{L}g^{*}H^{\cdot})\ar[d]^{\eta \cdot\eta}_{\wr}\ar[rr]^(.60){i_{Y}(\text{L}g^{*}(u,v))}_(.60){\sim}&& f_{X}(\text{L}g^{*}G^{\cdot})\ar[d]^{\eta}_{\wr}\\
g^{*}f_{Y}(F^{\cdot})\otimes g^{*}f_{Y}(H^{\cdot})\ar[rr]^{i_{Y}(u,v)}_{\sim}& &g^{*}f_{Y}(F^{\cdot})
}$$
commutes, where $\text{L}g^{*}$ is the left derived functor of the morphism $g$ and exists for the category whose objects are short exact sequences of 
complexes of three objects in $\text{Mod}(Y)$ and whose morphisms are triples
such that the resulting diagram (like the diagram in i) but not isomorphism in general) commutes (see \cite{Kund}, Prop. 3). Moreover if 
$$\xymatrix{X\ar[r]^{g}&Y\ar[r]^{h}&Z}$$ are two consecutive morphisms, the diagram:

$$\xymatrix{ f_{X}(\text{L}g^{*}\text{L}h^{*})\ar[r]^{\eta(g)}_{\sim}\ar[d]^{f_{X}(\theta)}_{\wr}& g^{*}f_{Y}\text{L}h^{*}\ar[r]^{g^{*}\eta(h)}_{\sim}&g^{*}h^{*}f_{Z}\ar[d]_{\wr}\\
          f_{X}(\text{L}(h\cdot g)^{*})\ar[rr]^{\sim}&  &(h\cdot g)^{*}f_{Z}
}$$
commutes where $\theta$ is the canonical isomorphism 
$$\theta:\xymatrix{ \text{L}g^{*}\text{L}h^{*}\ar[r]^{\sim}&\text{L}(h\cdot g)^{*}},$$
iv) On finite complexes of locally free $\mathcal{O}_{X}$-modules
$$f=\det~\text{and}~~~i=\mathrm{i}$$
\end{defi}
\begin{theo}\label{ucd}
 There is one, and, up to canonical isomorphism, only one extended determinant functor $(f, \mathrm{i})$ which we will write $(\det,\mathrm{i})$.
\end{theo}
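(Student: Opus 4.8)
The plan is to construct $(\det,\mathrm{i})$ on $\text{Parf-is}$ by descent from its already-established restriction to strictly perfect complexes, and then to read off uniqueness from the normalization axiom iv) together with the base-change axiom iii). The starting point is the determinant functor $(\det,\mathrm{i})$ on $\mathscr{C}^{\cdot}is_{X}$ recalled just before the statement: it exists, is unique, and in particular, since its target $\mathscr{P}is_{X}$ is a groupoid, it converts every quasi-isomorphism of strictly perfect complexes into an isomorphism of graded line bundles, functorially in that quasi-isomorphism. The key structural input I would use is the defining property of $\text{Parf}_{X}$ from Def.\ref{exd}: every perfect complex $F^{\cdot}$ is, on each member $U$ of a suitable open cover, quasi-isomorphic to a strictly perfect complex $G_{U}^{\cdot}$.

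First I would define the extended functor locally. For $F^{\cdot}\in\text{Parf-is}_{X}$ choose a cover $\{U_{\alpha}\}$ together with strictly perfect resolutions $G_{\alpha}^{\cdot}\xrightarrow{\sim} F^{\cdot}|_{U_{\alpha}}$, and set the candidate value to be $\det(G_{\alpha}^{\cdot})$ on $U_{\alpha}$. On an overlap $U_{\alpha\beta}$ both $G_{\alpha}^{\cdot}$ and $G_{\beta}^{\cdot}$ are strictly perfect and quasi-isomorphic to $F^{\cdot}|_{U_{\alpha\beta}}$, so after refining and passing through a common strictly perfect resolution they are linked by quasi-isomorphisms in $\mathscr{C}^{\cdot}is$. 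Applying $\det$, which lands in the groupoid $\mathscr{P}is_{X}$, produces a canonical transition isomorphism $\det(G_{\alpha}^{\cdot})|_{U_{\alpha\beta}}\cong\det(G_{\beta}^{\cdot})|_{U_{\alpha\beta}}$, and the uniqueness and functoriality of $\det$ on $\mathscr{C}^{\cdot}is$ force the cocycle identity on triple overlaps. Since graded line bundles satisfy descent (an invertible $\mathcal{O}_{X}$-module together with its locally constant rank function glues), these data assemble into a global object $\det(F^{\cdot})\in\mathscr{P}is_{X}$, independent up to canonical isomorphism of all the choices.

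Next I would verify the axioms I)--iv) by localizing each to the strictly perfect case. Axiom iv) holds by construction, and functoriality of $f_{X}$ on quasi-isomorphisms follows by lifting a quasi-isomorphism $F^{\cdot}\to H^{\cdot}$ locally to the chosen resolutions. For the multiplicativity isomorphism $\mathrm{i}_{X}(\alpha,\beta)$ of axiom II) I would prove a lifting lemma: locally a short exact sequence $0\to F\to G\to H\to 0$ of perfect complexes admits a short exact sequence of strictly perfect resolutions, to which the already-known isomorphism $\mathrm{i}$ on $\mathscr{C}^{\cdot}is$ applies; one then checks that the resulting isomorphism is independent of the lift and hence glues, the normalization $\mathrm{i}_{X}(1,0)=\mathrm{i}_{X}(0,1)=1$ being immediate. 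The coherence axioms i) and ii) reduce, after such local lifting, to the corresponding diagrams for strictly perfect complexes. For base change iii), I would use that $\text{L}g^{*}$ preserves perfect and strictly perfect complexes and that $g^{*}\det=\det g^{*}$ on locally free modules; since the whole construction is local, the isomorphism $\eta(g)$ and its cocycle compatibility for $X\xrightarrow{g}Y\xrightarrow{h}Z$ descend from the strictly perfect level.

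Finally, uniqueness is essentially forced once existence is arranged. Given any extended determinant functor $(f,\mathrm{i})$ satisfying I)--iv), axiom iv) pins down its value on strictly perfect complexes as $\det$, axiom iii) together with functoriality on quasi-isomorphisms then pins down its restriction to each open $U_{\alpha}$ on which $F^{\cdot}$ has a strictly perfect resolution, and the coherence axioms determine the gluing; hence $(f,\mathrm{i})$ is canonically isomorphic to the $(\det,\mathrm{i})$ just constructed. The main obstacle is the lifting lemma underlying axiom II): showing that a short exact sequence of perfect complexes can be locally resolved by a short exact sequence of strictly perfect complexes, and that the additivity isomorphism so produced is independent of the chosen resolution and compatible with the coherence diagrams of i) and ii). This homological bookkeeping, rather than any single deep idea, is where the real work of \cite{Kund} lies.
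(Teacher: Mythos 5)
The paper offers no argument for Theorem \ref{ucd} beyond the citation of \cite{Kund}, so your proposal has to be measured against the proof given there. Your overall strategy --- define $\det(F^{\cdot})$ locally by choosing strictly perfect resolutions as in Definition \ref{exd}, glue the resulting graded line bundles by descent, verify axioms I)--iv) by localizing to the strictly perfect case, and extract uniqueness from the normalization iv) together with the uniqueness of $(\det,\mathrm{i})$ on $\mathscr{C}^{\cdot}is_{X}$ --- is indeed the strategy of Knudsen and Mumford, and the lifting lemma for short exact sequences that you single out is a genuine ingredient of their argument.

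There is, however, a real gap at the point where you assert a ``canonical transition isomorphism.'' Two strictly perfect resolutions $G_{\alpha}^{\cdot}$ and $G_{\beta}^{\cdot}$ of $F^{\cdot}|_{U_{\alpha\beta}}$ are linked only by a zigzag of quasi-isomorphisms, i.e.\ by an isomorphism in the derived category; any chain map realizing that isomorphism is well defined only up to homotopy, and a priori $\det$ could send two homotopic quasi-isomorphisms of strictly perfect complexes to different isomorphisms of graded line bundles. Without the lemma that $\det$ on $\mathscr{C}^{\cdot}is_{X}$ is homotopy invariant --- equivalently, that it factors through the derived category --- the transition isomorphism depends on the chosen zigzag, the independence of the local value from the chosen resolution is unproved, and the cocycle identity on triple overlaps need not hold; it is certainly not ``forced'' by uniqueness and functoriality alone. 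This homotopy invariance is the central technical lemma of \cite{Kund}, and it is also what makes $f_{X}$ well defined on the morphisms of $\text{Parf-is}_{X}$, which are morphisms in the derived category rather than actual maps of complexes; it is a more fundamental obstacle than the short-exact-sequence lifting you identify as the main difficulty. A smaller omission: the gluing you invoke must apply not only to the objects of $\mathscr{P}is_{X}$ but also to the isomorphisms $\mathrm{i}_{X}(\alpha,\beta)$ and $\eta(g)$, i.e.\ you need descent for morphisms as well, and since the whole content of the theorem is the canonicity of these isomorphisms this step deserves to be made explicit.
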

\begin{proof}
See \cite{Kund}, Theorem 2, Pag. 42.
\end{proof}

The theorem above implies that the functor $(\det, \mathrm{i})$ have same compatibility as ordinary $\det^{*}$. In particular:

a) If each term $\mathcal{F}^{n}$ in the corresponding perfect complex $\mathcal{F}^{\cdot}$ is itself perfect, i.e., has locally a finite free resolution, then $$\det(\mathcal{F}^{\cdot})\cong \otimes_{n}\text{det}^{*}(\mathcal{F}^{n})^{(-1)^{n}}.$$

b) If the cohomology sheaves $H^{n}(\mathcal{F}^{\cdot})$ of the complex are perfect we denote the objects of subcategory by $\text{Parf}^{0}\subset\text{Parf}$,
then $$\det(\mathcal{F}^{\cdot})\cong \otimes_{n}\text{det}^{*}(H^{n}(\mathcal{F}^{\cdot}))^{(-1)^{n}}.$$

  For a vector bundle $E$, if $Rf_{*}E$ is a strictly perfect complex under some suitable morphism where $Rf_{*}$ is viewed as the right derived functor of $f_{*}$, then the properties of extended determinant functor is valid for the strictly perfect complex.

To conclude this section, we put together the determinant functor, the virtual category, and the Picard category  to make the following definition.
\begin{defi}\label{vdk}
For a scheme $X$, we denote by $Vect(X)$ the exact category of vector bundles over $X$ and by $V(X): =V(Vect(X))$ (resp. $V(Y)$) the virtual category of vector bundles on $X$ (resp. $Y$).
Let $f:X\rightarrow Y$ be a smooth and projective morphism and  $Y$ carries an ample invertible sheaf. Then there exists an induced functor from $V(X)$ to the Picard category $\mathscr{P}is_{Y}$ (the definition of $\mathscr{P}is_{Y}$is in the example \ref{lbd}) denoted 
by $\det Rf_{*}$, which is defined as follows:

In the Theorem \ref{ucd}, we have a unique functor $\det: \text{Perf-is}_{Y} \rightarrow \mathscr{P}is_{Y}$ . For any vector bundle $E$ from the exact category $Vect(X)$, it can be viewed a perfect complex $E^{.}$ with a term $E$ at 
degree $0$ and $0$ at other degree.   
For any perfect complex $E^{.}$ of $\mathcal{O}_{X}$-modules and the morphism $f$ in the definition, $Rf_{*}E^{.}$ is still a perfect complex of $\mathcal{O}_{Y}$-modules (see \cite{Berth}, IV, 2.12). Therefore, we have a functor
$\det Rf_{*} :(Vect(X), iso)\rightarrow\text{Perf-is}_{Y}$ where $(Vect(X), iso)$ is the category with the same objects from $Vect(X)$ and morphisms being only isomorphisms.
By the definition of the extended determinant functor $\det$, it can be verified that $\det Rf_{*}$ satisfies the same conditions from a) to d) with $[~]$ in Def. \ref{vircon}.
By the universality of the virtual category $V(X)$, the functor $\det Rf_{*}$ factors uniquely up to unique isomorphism through $(Vect(X), iso)\rightarrow V(X)$. More clearly, we have the following diagram:
$$\xymatrix{(Vect(X), iso)\ar[d]\ar[r]^(.60){Rf_{*}}&\text{Perf-is}_{Y}\ar[d]^{\det}\\
  V(X)\ar@{.>}[r]^{\det Rf_{*}}&\mathscr{P}is_{Y}     
        }$$
Meanwhile, there is a functor
 $V(X)\rightarrow\mathscr{P}is_{Y}$ which is still denoted by $\det Rf_{*}$.
 \end{defi}

\subsection{The Deligne pairing}\label{sdp}
Before stating the functorial Riemann-Roch theorem, it is necessary to introduce the Deligne pairing, which appeared in \cite{Dir} for the first time. Originally, Deligne pairing was defined for a morphism which is proper, flat and purely of relative dimension $1$. The Deligne pairing under the circumstance above is closely related to a norm functor (See \cite{Del} for their relation). In this paper, we need the Deligne pairing for a morphism of relative  dimension $n$ which is defined by S. Zhang. 
\begin{defi}\label{d,p.n}(See \cite{Zhang}, \S 1.1)
Let $f:X\rightarrow S$ be a flat and projective morphism of integral schemes of purely relative dimension $n$.
Let $L_{0},\cdots ,L_{n}$ be line bundles on $X$. Then $\langle L_{0},\cdots ,L_{n}\rangle$ is defined to be the $\mathcal{O}_{S}$-module
which is generated, locally for Zariski topology on $S$, by the symbols $\langle l_{0},\cdots,l_{n}\rangle$ for rational sections $l_{i}$ of $L_{i}$ for 
each $i\in [0, n]$ such that the corresponding divisors are disjoint, i.e., $\bigcap_{i=0}^{n} \text{div}( l_{i}) =\emptyset$ 
 with the following relation: for some $i$ between $0$ and $n$ and rational function $g$ on $X$, if the intersection 
 $\cap_{j\neq i}\text{div}( l_{j})=\sum_{k}n_{k}Y_{k}$ is finite over $S$ and has empty intersection with $\text{div}(f)$, then
 $$\langle l_{0},\cdots,fl_{i},\cdots,l_{n}\rangle=\prod_{k}\text{Norm}_{Y_{k}}(g)^{n_{k}}\langle l_{0},\cdots,l_{i},\cdots,l_{n}\rangle $$
 \end{defi}
By the definition, it is direct that the pairing is symmetric and multi-linear with respect to the group structure defined by the tensor products and dualization. In sequel, we will call the Deligne pairing above extended Deligne pairing.
  
In this subsection, as in section 2, the Picard category of graded line bundles still will be denoted by $\mathscr{P}is_{X}$ and the virtual category of the exact category of vector bundles will be denoted by $V(X)$ for any scheme $X$.
For any vector bundle $E$ from an exact category of vector bundles, which is viewed as a complex, $Rf_{*}E$ is a complex again under some given morphism $f$. The most important property of an extended Deligne pairing is that there exists a canonical isomorphism between the extended Deligne pairing and line bundles under the determinant functor.
\begin{theo}
Let $f:X\rightarrow S$ be projective, flat and purely of relative dimension $n$ and $L_{i}$ be bundles over $X$ with $i \in [0, n]$.
Then there is a canonical isomorphism $\langle L_{0}, \cdots, L_{n}\rangle \cong \det Rf_{*} (\otimes_{i=0}^{n}(L_{i}-\mathcal{O}_{X}))$.
\end{theo}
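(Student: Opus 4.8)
The plan is to argue by induction on the relative dimension $n$, reducing the dimension by cutting with the divisor of a section of $L_{n}$ and invoking the functoriality of $\det R(-)_{*}$ under composition of morphisms. Since the statement is local for the Zariski topology on $S$ and both sides are additive in each $L_{i}$ up to canonical isomorphism (for the right-hand side this rests on the compatibilities of the extended determinant functor recorded after Theorem \ref{ucd}, properties a) and b)), one first reduces to the case where $L_{n}$ is very ample, so that it admits a rational section $l_{n}$ whose divisor is flat over $S$; this ampleness is available because $f$ is projective. The base case is $n=0$: here $f$ is finite and flat, so $R^{i}f_{*}=0$ for $i>0$ and $\det Rf_{*}(L_{0}-\mathcal{O}_{X})=\det f_{*}L_{0}\otimes(\det f_{*}\mathcal{O}_{X})^{-1}$, while $\langle L_{0}\rangle$ is, by its construction in Definition \ref{d,p.n}, the norm $\mathrm{Norm}_{X/S}(L_{0})$. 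The required canonical isomorphism $\mathrm{Norm}_{X/S}(L_{0})\cong\det f_{*}L_{0}\otimes(\det f_{*}\mathcal{O}_{X})^{-1}$ is the classical identification of the norm of a line bundle along a finite flat morphism with the determinant of its direct image, and this supplies the base of the induction.

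For the inductive step I would pass to the divisor $j\colon D\hookrightarrow X$ of the chosen section $l_{n}$, a scheme again flat and projective over $S$ but now of pure relative dimension $n-1$, whose support meets the divisors of the remaining $l_{i}$ properly. Multiplication by $l_{n}$ yields the short exact sequence
$$\xymatrix{0\ar[r]&L_{n}^{-1}\ar[r]&\mathcal{O}_{X}\ar[r]&j_{*}\mathcal{O}_{D}\ar[r]&0,}$$
so that $[j_{*}\mathcal{O}_{D}]=\mathcal{O}_{X}-L_{n}^{-1}$ in the virtual category; combining this with the projection formula gives the relation
$$j_{*}\Big(\textstyle\bigotimes_{i=0}^{n-1}(L_{i}|_{D}-\mathcal{O}_{D})\Big)=(\mathcal{O}_{X}-L_{n}^{-1})\cdot\textstyle\bigotimes_{i=0}^{n-1}(L_{i}-\mathcal{O}_{X}).$$
On the determinant side I would then use the functoriality of $\det R(-)_{*}$ under the composition $f\circ j=f|_{D}$, namely $\det Rf_{*}\circ j_{*}\cong\det R(f|_{D})_{*}$ (here $j$ is a closed immersion, so $Rj_{*}=j_{*}$), to rewrite $\det Rf_{*}$ of the right-hand class above as the $(n-1)$-dimensional determinant over $D$, to which the inductive hypothesis applies and returns $\langle L_{0}|_{D},\dots,L_{n-1}|_{D}\rangle_{D/S}$. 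On the pairing side, the generators attached to $l_{n}$ realize a canonical isomorphism $\langle L_{0},\dots,L_{n}\rangle\cong\langle L_{0}|_{D},\dots,L_{n-1}|_{D}\rangle_{D/S}$, the standard restriction-to-divisor property of the Deligne pairing. Matching these two identifications yields the desired isomorphism in relative dimension $n$.

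The step I expect to be the main obstacle is reconciling the class $\bigotimes_{i=0}^{n}(L_{i}-\mathcal{O}_{X})$ occurring on the right-hand side with the class $j_{*}\big(\bigotimes_{i=0}^{n-1}(L_{i}|_{D}-\mathcal{O}_{D})\big)$ produced by the divisor. Writing $P=\bigotimes_{i=0}^{n-1}(L_{i}-\mathcal{O}_{X})$, one has $\bigotimes_{i=0}^{n}(L_{i}-\mathcal{O}_{X})=(L_{n}-\mathcal{O}_{X})P$, whereas the divisor produces $(\mathcal{O}_{X}-L_{n}^{-1})P=L_{n}^{-1}(L_{n}-\mathcal{O}_{X})P$, so the two classes differ by the factor $L_{n}$ and their difference equals $L_{n}^{-1}(L_{n}-\mathcal{O}_{X})^{2}P$. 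This difference is a product of $n+2$ classes in the augmentation ideal, hence lies in $F^{n+2}$ of the $\gamma$-filtration; since $Rf_{*}$ for a morphism of relative dimension $n$ lowers this filtration by $n$, the difference lands in $F^{2}$, which the determinant functor, sensitive only to the first Chern class (that is, to $F^{1}/F^{2}$), trivializes. The delicate point is to upgrade this numerical vanishing into a canonical isomorphism in $\mathscr{P}is_{S}$: one must check that the trivialization is independent of the auxiliary section $l_{n}$ and satisfies the cocycle conditions needed to glue the local-on-$S$ constructions into a global isomorphism. This same $F^{\geqslant n+2}$-vanishing is, moreover, exactly what underlies the additivity of the right-hand side invoked in the first paragraph, so it is the crux of the whole argument. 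This is precisely where the categorical refinements, the virtual category $V(X)$ and the extended determinant functor of Theorem \ref{ucd} together with the multilinearity of the Deligne pairing, must be used in full rather than merely their shadows at the level of $K_{0}$.
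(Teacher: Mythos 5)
First, a point of reference: the paper does not prove this statement internally; its entire proof is the citation to Theorem 1 of \cite{BSW}, so there is no in-paper argument to compare yours against line by line. Your strategy (induction on $n$ by cutting with the divisor of a section of $L_{n}$, with base case the classical norm--determinant comparison for a finite flat morphism) is broadly the route taken in that literature, and your base case and your use of the restriction-to-a-divisor property of the pairing are sound.

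Nevertheless the proposal has a genuine gap, precisely at the point you flag as "the main obstacle," and that point is not a loose end but the entire content of the theorem. Both the reduction to very ample $L_{n}$ (which needs additivity of $L_{n}\mapsto\det Rf_{*}((L_{n}-\mathcal{O}_{X})\otimes P)$ up to canonical isomorphism) and the reconciliation of $(L_{n}-\mathcal{O}_{X})\otimes P$ with $(\mathcal{O}_{X}-L_{n}^{-1})\otimes P$ require a \emph{canonical trivialization} of $\det Rf_{*}$ on a product of $n+2$ rank-zero classes. Your justification is the $\gamma$-filtration computation: the class lies in $F^{n+2}$, $Rf_{*}$ lowers the filtration by $n$, so the pushforward lies in $F^{2}$ and has vanishing first Chern class. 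But that is a statement in $K_{0}(S)_{\mathbb{Q}}$, i.e.\ about $c_{1}$ in $\mathrm{Pic}(S)\otimes\mathbb{Q}$; it does not produce an isomorphism of line bundles, does not even exclude a nontrivial torsion class, and a fortiori does not produce a trivialization that is independent of the auxiliary section $l_{n}$ and glues over $S$. You also cannot close this by invoking Corollary \ref{trs}, since that corollary is deduced from Proposition \ref{d,p}, which is itself deduced from the very theorem you are proving; the argument would be circular. What is actually required is an explicit construction of the trivialization (via explicit resolutions or Koszul-type complexes, as in \cite{BSW} and, earlier, in Deligne's and Elkik's work on intersection bundles), and without it the induction does not close. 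A secondary issue: Definition \ref{d,p.n} is stated for integral schemes, and the divisor $D$ of $l_{n}$ need not be integral, so the inductive hypothesis does not literally apply to $D\rightarrow S$ without either extending the definition of the pairing or choosing $l_{n}$ more carefully.
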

\begin{proof}
See Theorem 1 in \cite{BSW}.
\end{proof}
In \cite{Del}, Deligne  provided a variant of the Deligne pairing (see Page. 149 , \cite{Del}) which is related to a norm functor. For the extended 
Deligne pairing, we have a property which is completely analogous to the variant of Deligne pairing. The next proposition is essentially a consequence 
of the theorem above. 
\begin{pro}\label{d,p}
Let $f:X\rightarrow S$ be projective, flat and purely of relative dimension $n$.  For each $i\in [0,n]$, let $E_{i}$ and $F_{i}$ be locally free coherent sheaves with same rank everywhere over $X$. Then we have the following canonical isomorphism
$$\langle\det(E_{0}-F_{0}),\cdots, \det(E_{n}-F_{n})\rangle\cong\det Rf_{*}(\otimes_{i=0}^{n}(E_{i}-F_{i})).$$
\end{pro}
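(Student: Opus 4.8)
The plan is to deduce the proposition from the theorem above by replacing each difference of bundles with its determinant line bundle and controlling the error after pushforward. First I would note that, since $E_i$ and $F_i$ have the same rank everywhere, the virtual class $\xi_i := E_i-F_i$ has rank $0$ and its determinant $L_i := \det(E_i-F_i)=\det E_i\otimes(\det F_i)^{-1}$ is an honest line bundle on $X$. Applying the theorem above to $L_0,\dots,L_n$ already gives a canonical isomorphism $\langle L_0,\dots,L_n\rangle\cong\det Rf_*(\otimes_{i=0}^n(L_i-\mathcal{O}_X))$, so the whole statement reduces to producing a canonical isomorphism
$$\det Rf_*\Big(\otimes_{i=0}^n(E_i-F_i)\Big)\;\cong\;\det Rf_*\Big(\otimes_{i=0}^n(L_i-\mathcal{O}_X)\Big),$$
that is, to comparing the two virtual classes $\otimes_i\xi_i$ and $\otimes_i(L_i-\mathcal{O}_X)$ only up to what $\det Rf_*$ can detect.

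Second, I would compare the two classes inside $K_0(X)$ using the codimension (or $\gamma$-) filtration $F^\bullet K_0(X)$ (see \cite{FL}). For a single slot one has $\xi_i\equiv L_i-\mathcal{O}_X\pmod{F^2K_0(X)}$: this is the standard identification of $F^1/F^2$ with $\mathrm{Pic}(X)$ sending a rank-$0$ class to its first Chern class, i.e. to its determinant. To check it transparently I would pull back along a flag bundle $\pi:X'\to X$ simultaneously splitting all the $E_i$ and $F_i$; there $\pi^*\xi_i$ becomes a $\mathbb{Z}$-combination of classes $M-\mathcal{O}$ of line bundles, whence $\pi^*\xi_i\equiv\det(\pi^*\xi_i)-\mathcal{O}=\pi^*L_i-\mathcal{O}\pmod{F^2}$, and I would descend via the injectivity of $\pi^*$ on $K_0$ and its compatibility with the filtration. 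Multiplying these $n+1$ congruences and using $F^1\cdot F^1\subset F^2$ gives
$$\otimes_{i=0}^n\xi_i\;\equiv\;\otimes_{i=0}^n(L_i-\mathcal{O}_X)\pmod{F^{n+2}K_0(X)}.$$

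Third, since $\det Rf_*$ is additive (it is the functor $V(X)\to\mathscr{P}is_S$ of Definition \ref{vdk}, additive by axiom (a) of the virtual category), the two sides of the desired isomorphism differ exactly by $\det Rf_*(\zeta)$ with $\zeta\in F^{n+2}K_0(X)$, so it remains to trivialize $\det Rf_*(\zeta)$. Numerically this is forced by the $\text{c}_1$-form of Grothendieck--Riemann--Roch recalled in the introduction, $\text{c}_1(Rf_*\zeta)=f_!(\text{ch}(\zeta)\,Td(\mathrm{T}_f))_{(1)}$: because $f$ has relative dimension $n$ the operator $f_!$ lowers degrees by $n$, while $\text{ch}(\zeta)$ begins in degree $\geq n+2$, so the degree-$1$ component vanishes and $\det Rf_*(\zeta)$ is algebraically trivial.

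The main obstacle is precisely this final point, namely upgrading the vanishing of $\text{c}_1$ to a \emph{canonical}, sign-determined trivialization of the line bundle $\det Rf_*(\zeta)$ rather than a mere equality of isomorphism classes in $\mathrm{Pic}(S)$; this is exactly the gap separating the functorial statement from the classical one. I expect to close it by staying inside the Picard category instead of passing to Chow groups: I would realize the second-step congruence as an explicit chain of short exact sequences coming from the flag-bundle splitting, apply the additivity isomorphisms $i$ of the extended determinant functor term by term, and then use the base-change compatibility (iii) of $\det$ together with the universal property of $V(X)$ to verify that the resulting isomorphism does not depend on the chosen splitting and is therefore canonical.
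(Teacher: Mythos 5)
Your opening reduction is sound and your $K_0$-computation is correct: $E_i-F_i\equiv\det(E_i-F_i)-\mathcal{O}_X\pmod{F^2}$ and hence $\otimes_i(E_i-F_i)\equiv\otimes_i(L_i-\mathcal{O}_X)\pmod{F^{n+2}}$. But the route through the filtration and Grothendieck--Riemann--Roch does not close: the decisive step is to trivialize $\det Rf_{*}(\zeta)$ for $\zeta\in F^{n+2}K_0(X)$, and the $c_1$-form of GRR only gives $c_1(Rf_*\zeta)=0$ in $A^1(S)_{\mathbb{Q}}$, i.e.\ triviality of the line bundle up to torsion in $\mathrm{Pic}(S)$ --- not triviality in $\mathrm{Pic}(S)$, and certainly not a \emph{canonical} trivialization in $\mathscr{P}is_S$. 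You correctly flag this as the main obstacle, but the repair you sketch (realize the congruence by explicit exact sequences from the flag-bundle splitting and apply the additivity isomorphisms of $\det$) is not a supplement to the argument; it \emph{is} the entire argument, and once written out it no longer uses the filtration or GRR at all. Note also that you cannot appeal to Corollary \ref{trs} for the trivialization on $F^{n+2}$: in the paper that corollary is \emph{deduced from} this proposition, so invoking it here would be circular. A further wrinkle: descending an isomorphism of line bundles on $S$ from a construction performed after pulling back to a flag bundle over $X$ requires more than injectivity of $\pi^{*}$ on $K_0$, which only handles the numerical congruence.

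The paper's proof is more direct and never leaves the categorical level. It first checks, from the theorem of Biswas--Schumacher--Weng ($\langle L_0,\dots,L_n\rangle\cong\det Rf_*(\otimes_i(L_i-\mathcal{O}_X))$) together with the multilinearity of the pairing, that for line bundles $A,B$ one has $\langle\det(A-B),L_1,\dots,L_n\rangle\cong\det Rf_*((A-B)\otimes_{i\ge1}(L_i-\mathcal{O}_X))$; it then writes each $E_i-F_i$ as a sum $\sum_k(A_{ik}-B_{ik})$ of differences of line bundles by the splitting principle and expands \emph{both} sides slot by slot --- the Deligne pairing by multilinearity, $\det Rf_*$ by additivity --- so that the general case is a tensor product of the line-bundle cases. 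Nothing is discarded into an error term, so no trivialization of $\det Rf_*$ on a filtration step is ever needed. What your approach would buy, if the final step could be made canonical, is a cleaner conceptual statement (``$\det Rf_*$ only sees the class modulo $F^{n+2}$''); but establishing that statement functorially is strictly harder than the proposition itself, whereas the paper's term-by-term expansion avoids it entirely.
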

\begin{proof}
In the first, observe that  for any line bundles $A$ and $L_{i}$ over $X$, $\langle\det(A-A),L_{1}, \cdots, L_{n}\rangle \cong \langle \mathcal{O}_{X} ,L_{1},
\cdots, L_{n}\rangle \cong \mathcal{O}_{S}$. Therefore,  we have $\langle A^{*},L_{1}, \cdots, L_{n}\rangle\\
\cong \langle A,L_{1}, \cdots, L_{n}\rangle^{*}$,
where the superscript $\ast$ is an operation by taking dualization of a line bundle.  Then for any two line bundles $A, B$ over $X$, according to the Theorem \ref{d,p}, we have
the following isomorphisms:
\begin{align}
\langle\det(A-B),L_{1}, \cdots, L_{n}\rangle &\cong\langle A,L_{1}, \cdots, L_{n}\rangle\otimes \langle B^{*},L_{1}, \cdots, L_{n}\rangle\notag\\
&\cong \langle A,L_{1}, \cdots, L_{n}\rangle\otimes \langle B,L_{1}, \cdots, L_{n}\rangle^{*}\notag\\
&\cong (\det Rf_{*}((A-\mathcal{O}_{X})\otimes_{i=1}^{n}(L_{i}-\mathcal{O}_{X})))\notag\\
 &\otimes(\det Rf_{*}((B-\mathcal{O}_{X})\otimes_{i=1}^{n}(L_{i}-\mathcal{O}_{X})))^{\ast}\notag\\
 &\cong (\det Rf_{*}((A-\mathcal{O}_{X})\otimes_{i=1}^{n}(L_{i}-\mathcal{O}_{X})))\notag\\
 &\otimes\det Rf_{*}((\mathcal{O}_{X}-B)\otimes_{i=1}^{n}(L_{i}-\mathcal{O}_{X}))\notag\\
 &\cong \det Rf_{*}(((A-\mathcal{O}_{X})+(\mathcal{O}_{X}- B))\otimes_{i=1}^{n}(L_{i}-\mathcal{O}_{X}))\notag\\
&\cong\det Rf_{*}((A-B)\otimes_{i=1}^{n}(L_{i}-\mathcal{O}_{X})).\notag
\end{align}
Moreover, by the splitting principle, it is sufficient to prove the proposition by considering $E_{i}-F_{i}=\sum_{k=1}^{i_{s}}(A_{ik}-B_{ik})$
with line bundles $A_{ik}, B_{ik}$ over $X$ for $i\in [0,n]$. Also,  a well-known fact is that the determinant functors in both of sides of the isomorphism in  the proposition turn the additivity into the tensor product. Then one has\begin{align}
\langle\det(E_{0}-F_{0}),L_{1}, \cdots, L_{n}\rangle &\cong \langle\det(\sum_{k=1}^{0_{s}}(A_{0k}-B_{0k})),L_{1}, \cdots, L_{n}\rangle\notag\\ 
&\cong \otimes_{k=1}^{0_{s}}\langle\det((A_{0k}-B_{0k})),L_{1}, \cdots, L_{n}\rangle\notag\\ 
&\cong \otimes_{k=1}^{0_{s}}\det Rf_{*}((A_{0k}-B_{0k})\otimes_{i=1}^{n}(L_{i}-\mathcal{O}_{X}))\notag\\
& \cong \det Rf_{*}(\sum_{k=1}^{0_{s}}(A_{0k}-B_{0k})\otimes_{i=1}^{n}(L_{i}-\mathcal{O}_{X}))\notag\\
& \cong\det Rf_{*}(E_{0}-F_{0}))\otimes_{i=1}^{n}(L_{i}-\mathcal{O}_{X}))\notag
\end{align} 
Finally, our proposition follows from
\begin{align}
\langle\det(E_{0}-F_{0}), &\cdots,\det( E_{n}-F_{n})\rangle\notag\\
&\cong\langle\det(\sum_{k=1}^{0_{s}}(A_{0k}-B_{0k}), \cdots, \det(\sum_{k=1}^{n_{s}}(A_{nk}-B_{nk})\rangle\notag\\
&\cong\otimes_{k_{0},\cdots, k_{n}}^{0_{s},\cdots, n_{s}} \langle\det(A_{0k_{0}}-B_{0k_{0}}), \cdots, \det(A_{nk_{n}}-B_{nk_{n}})\rangle\notag\\
&\cong\otimes_{k_{0},\cdots, k_{n}}^{0_{s},\cdots, n_{s}} \det Rf_{\ast}(\otimes_{i=0}^{n}(A_{ik_{i}}-B_{ik_{i}}))\notag\\
&\cong\det Rf_{\ast}(\otimes_{i=0}^{n}(\sum_{k_{i}=1}^{i_{s}}(A_{ik_{i}}-B_{ik_{i}})))\notag\\
&\cong  \det Rf_{\ast}(\otimes_{i=0}^{n}(E_{i}-F_{i})).\notag
\end{align} 
\end{proof}
\begin{coro}\label{trs} 
In particular, we have a canonical isomorphism $\det Rf_{*}((H_{0}-H_{1})^{\otimes l}\otimes H)\cong\mathcal{O}_{S}$ if $l\geq n+2$ and the ranks
\text{rk}$H_{0} = \text{rk}H_{1}$, for any vector bundles $H_{0}, H_{1}, H$ over $X$ and $f$ as in the theorem, which is stable under base change.
\end{coro}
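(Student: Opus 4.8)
The plan is to deduce the corollary directly from Proposition \ref{d,p} by exhibiting $(H_{0}-H_{1})^{\otimes l}\otimes H$ as a tensor product of $n+1$ rank-zero virtual bundles, arranged so that one factor has trivial determinant. Concretely, I would set
$$M_{0}=(H_{0}-H_{1})^{\otimes(l-n)}\otimes H,\qquad M_{i}=H_{0}-H_{1}\ \ (1\le i\le n),$$
so that $\bigotimes_{i=0}^{n}M_{i}=(H_{0}-H_{1})^{\otimes l}\otimes H$. Since $\mathrm{rk}\,H_{0}=\mathrm{rk}\,H_{1}$, the class $u:=H_{0}-H_{1}$ has rank $0$; as $l\ge n+2$ we have $l-n\ge 2\ge 1$, so every $M_{i}$ is a rank-zero element of $K_{0}(X)$ and hence can be represented as a genuine difference $E_{i}-F_{i}$ of vector bundles of equal rank, which is exactly the input required by Proposition \ref{d,p}.

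Applying that proposition yields a canonical isomorphism $\det Rf_{*}\big(\bigotimes_{i=0}^{n}M_{i}\big)\cong\langle\det M_{0},\dots,\det M_{n}\rangle$. The crux is then to observe that $\det M_{0}\cong\mathcal{O}_{X}$. Recall that $\det$ of a virtual bundle is its first Chern class, and for the rank-zero class $u$ one has $c_{1}(u^{\otimes m})=0$ whenever $m\ge 2$: the degree-one part of $\mathrm{ch}(u)^{m}=(c_{1}(u)+\cdots)^{m}$ vanishes for $m\ge 2$ precisely because $u$ has rank $0$. Since $l-n\ge 2$, this already forces $\det\big((H_{0}-H_{1})^{\otimes(l-n)}\big)\cong\mathcal{O}_{X}$, and tensoring a rank-zero, trivial-determinant class $w$ with $H$ preserves triviality because $\det(w\otimes H)=(\det w)^{\mathrm{rk}\,H}\otimes(\det H)^{\mathrm{rk}\,w}=(\det w)^{\mathrm{rk}\,H}$. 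This is exactly where the hypothesis $l\ge n+2$ (rather than merely $l\ge n+1$) is needed: with only $l-n=1$ the factor $M_{0}$ would have determinant $(\det u)^{\mathrm{rk}\,H}$, which is nontrivial in general.

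With $\det M_{0}\cong\mathcal{O}_{X}$ established, I would invoke the opening observation of the proof of Proposition \ref{d,p}, namely $\langle\mathcal{O}_{X},L_{1},\dots,L_{n}\rangle\cong\mathcal{O}_{S}$, together with the symmetry and multilinearity of the extended pairing, to conclude
$$\langle\det M_{0},\det M_{1},\dots,\det M_{n}\rangle\cong\langle\mathcal{O}_{X},\det M_{1},\dots,\det M_{n}\rangle\cong\mathcal{O}_{S},$$
whence $\det Rf_{*}\big((H_{0}-H_{1})^{\otimes l}\otimes H\big)\cong\mathcal{O}_{S}$. Stability under base change then follows formally, since every isomorphism above is canonical and assembled only from the base-change compatibility of the extended determinant functor (Definition \ref{ex,de}(iii)) and of the extended Deligne pairing.

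I expect the main obstacle to be the determinant computation $\det M_{0}\cong\mathcal{O}_{X}$: one must argue cleanly that the determinant (first Chern class) of a product of at least two rank-zero virtual bundles is trivial, and verify that this is witnessed by a canonical, base-change–compatible isomorphism of line bundles rather than a mere equality of isomorphism classes. Everything else is a bookkeeping reduction to Proposition \ref{d,p}.
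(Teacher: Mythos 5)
Your overall route is the same as the paper's: both reduce to Proposition \ref{d,p} by writing $(H_{0}-H_{1})^{\otimes l}\otimes H$ as a tensor product of $n+1$ rank-zero virtual bundles, arrange for one slot to have trivial determinant, and then use $\langle\mathcal{O}_{X},L_{1},\dots,L_{n}\rangle\cong\mathcal{O}_{S}$ together with multilinearity. (The paper first treats $l=n+2$ and then absorbs the excess factors $(H_{0}-H_{1})^{\otimes(l-n-2)}$ into the \emph{second} slot, whereas you absorb everything into $M_{0}$ at once; that difference is cosmetic.) The one place where you genuinely diverge is the crux, and there your argument has a gap. You justify $\det M_{0}\cong\mathcal{O}_{X}$ by computing $c_{1}(u^{\otimes m})=0$ for $m\geq 2$ via the Chern character. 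That only shows the determinant line bundle has vanishing first Chern class in the (rational) Chow group; it neither produces the \emph{canonical} isomorphism the corollary asserts, nor is it even guaranteed to imply triviality of the isomorphism class without extra hypotheses on $X$ (injectivity of $\mathrm{Pic}(X)\to A^{1}(X)_{\mathbb{Q}}$ is not automatic for the integral, not necessarily normal or regular, $X$ allowed here). Since the entire point of the statement is to upgrade a class-level identity to a canonical, base-change-compatible isomorphism in a Picard category, deferring to Chern classes at exactly this step defeats the purpose --- as you yourself note in your final paragraph, without resolving it.

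The paper closes this gap by an explicit computation rather than a Chern-class one: it chooses the concrete representative $E_{0}=H_{0}^{\otimes 2}+H_{1}^{\otimes 2}$, $F_{0}=2(H_{0}\otimes H_{1})$ of $(H_{0}-H_{1})^{\otimes 2}$ and applies the canonical identity $\det(A\otimes B)\cong(\det A)^{\mathrm{rk}\,B}\otimes(\det B)^{\mathrm{rk}\,A}$ termwise, which with $\mathrm{rk}\,H_{0}=\mathrm{rk}\,H_{1}$ gives a canonical trivialization $\det(E_{0}-F_{0})\cong\mathcal{O}_{X}$ that is manifestly compatible with base change. You could repair your proof the same way: the identity above yields, for any two rank-zero virtual bundles $w,w'$ written as differences of bundles of equal rank, a canonical isomorphism $\det(w\otimes w')\cong(\det w)^{\mathrm{rk}\,w'}\otimes(\det w')^{\mathrm{rk}\,w}\cong\mathcal{O}_{X}$; applying this to $M_{0}=u^{\otimes 2}\otimes\bigl(u^{\otimes(l-n-2)}\otimes H\bigr)$ (using $l\geq n+2$) gives the canonical trivialization you need, after which the rest of your reduction goes through.
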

\begin{proof} 
We will first prove the corollary for $l=n+2$ and then for $l>n+2$. We apply Proposition \ref{d,p} to $E_{0}=H_{0}^{\otimes 2}+H_{1}^{\otimes 2},$
 $F_{0}=2(H_{0}\otimes H_{1})$, $E_{1}=H_{0}\otimes H$, $F_{1}=H_{1}\otimes H$ and $E_{i}=H_{0}$, $F_{i}=H_{1}$ for each $i\in [2, n]$. \\
Then we have the following:
\begin{align}
 &(E_{0}-F_{0})\otimes(E_{1}-F_{1})\otimes _{i=2}^{n}(E_{i}-F_{i})\notag\\
 &\cong (H_{0}-H_{1})^{\otimes 2}\otimes H\otimes(H_{0}-H_{1})\otimes(H_{0}-H_{1})^{\otimes (n-1)}\notag\\
&\cong H \otimes (H_{0}-H_{1})^{\otimes (n+2)}.\notag 
\end{align}
Because of the ranks $\text{rk} (H_{0}) =\text{rk} (H_{1})$, we immediately have equalities of ranks $\text{rk} E_{0}= \text{rk} F_{0}$, $\text{rk}E_{i}=\text{rk} F_{i}$ for each $i\in [1,n]$.
Meanwhile, notice that 
\begin{align}
\det(E_{0}-F_{0})&\cong(\det(H_{0}))^{\otimes 2\text{rk}(H_{0})}\otimes(\det(H_{1}))^{\otimes 2\text{rk}(H_{1})}\notag\\
&\otimes((\det(H_{0})^{\otimes -\text{rk}(H_{0})}\otimes(\det(H_{1}))^{\otimes -\text{rk}(H_{1})})^{2}\notag\\
&\cong\mathcal{O}_{X}.\notag
\end{align}

According to the multi-linear of the Deligne pairing ( see statements after Def. \ref{d,p.n}), by a trivial computation for line bundles $L_{i}$ over $X$ with $i\in [1,n]$:
$\langle\mathcal{O}_{X},L_{1},\cdots ,L_{n}\rangle\cong\langle\mathcal{O}_{X}\otimes\mathcal{O}_{X},L_{1},\cdots ,L_{n}\rangle\cong\langle\mathcal{O}_{X},L_{1},\cdots ,L_{n}\rangle\otimes\langle\mathcal{O}_{X},L_{1},\cdots ,L_{n}\rangle$, the obvious consequence is \\
$\langle\mathcal{O}_{X},L_{1},\cdots ,L_{n}\rangle \cong \mathcal{O}_{S}$.
Now, we can obtain the corollary by 
\begin{align}
\det Rf_{*}((H_{0}-H_{1})^{\otimes (n+2)}\otimes H)&\cong\det Rf_{*}((E_{0}-F_{0})\otimes_{i=1}^{n} (E_{i}-F_{i}))\notag\\
&\cong\langle(\det(E_{0}-F_{0}),\det(E_{1}-F_{1}),\cdots,\det(E_{n}-F_{n})\rangle\notag\\
&\cong\langle\mathcal{O}_{X},\det(E_{1}-F_{1}),\cdots, \det(E_{n}-F_{n})\rangle\notag\\
&\cong \mathcal{O}_{S}.\notag
\end{align}
This proves the corollary for the case $l=n+2$.
In general, for any integer $l>0$, one has the virtual bundle $(E_{1}-F_{1})\otimes (H_{0}-H_{1})^{\otimes l}$ of rank $0$. By the same reason as above,
these are the following isomorphisms:
\begin{align}
\det &Rf_{*}((H_{0}-H_{1})^{\otimes l}\otimes (H_{0}-H_{1})^{\otimes (n+2)}\otimes H)\notag\\
&\cong\det Rf_{*}((E_{0}-F_{0})\otimes (E_{1}-F_{1})\otimes (H_{0}-H_{1})^{\otimes l}\otimes_{i=2}^{n}(E_{i}-F_{i}))\notag\\
&\cong\langle(\det(E_{0}-F_{0}),\det((E_{1}-F_{1})\otimes (H_{0}-H_{1})^{\otimes l}),\det (E_{2}-F_{2}),\cdots, \det (E_{n}-F_{n})\rangle\notag\\
&\cong\langle\mathcal{O}_{X},\det((E_{1}-F_{1})\otimes (H_{0}-H_{1})^{\otimes l}),\det (E_{2}-F_{2}),\cdots, \det (E_{n}-F_{n})\rangle\notag\\
&\cong \mathcal{O}_{S}.\notag
\end{align}
Therefore, one has $\det Rf_{*}((H_{0}-H_{1})^{\otimes l}\otimes H)\cong \mathcal{O}_{S} $ for any $l\geqslant n+2$.

For any morphism $g :S^{'}\rightarrow S$, we have the fiber product under base change:
$$\xymatrix{X^{'}\ar[r]^{g^{'}}\ar[d]_{f^{'}}&X\ar[d]^{f}\\
S^{'}\ar[r]^{g}&S}$$

Furthermore, the projective morphism and the flat morphism are stable under base change (see \cite{Liu}, Chapt. 3), i.e., the morphism $f^{'}$ is flat of relative dimension $n$ and projective. 
For any vector bundle $F$ over $X$, then we have the isomorphism: $$g^{*}(\text{det}_{S}(Rf_{*}F))\cong{det}_{S^{'}}(\text{L}g^{*}(Rf_{*}F))\cong\text{det}_{S^{'}}(Rf^{'}_{*}(g^{'*}F)$$ 
which follows from the iii) of the Def. 2.9 of the extended determinant functor and the formation of cohomology commuting with base change (Again, we will  prove the two isomorphisms in (10) and (11) of of Theorem \ref{functor} (II)).

Let $F$ be $(H_{0}-H_{1})^{\otimes l}\otimes H$ with $l\geqslant n+2$, and the isomorphism above becomes $$ \text{det}_{S^{'}}(Rf^{'}_{*}(g^{'*}F))\cong g^{*}(\text{det}_{S}(Rf_{*}F))\cong g^{*}(\mathcal{O}_{S})\cong \mathcal{O}_{S^{'}}.$$ Therefore, we finish the proof.

\end{proof}
\section{The Adams-Riemann-Roch Theorem}
\subsection{The Bott element and the Adams operation}
In this subsection, we will provide the statement of Adams-Riemann-Roch theorem after recalling the ingredients in this theorem, including
the Bott element and the Adams operation.
\begin{defi}\label{ad,2}
For any integer $k\geq 1$, the symbol $\theta^{k}$ refers to an operation, which associates an element of $K_{0}(X)$ to any locally free coherent sheaf on a quasi-compact scheme $X$. It satisfies the following three properties: 

(1) For any invertible sheaf $L$ over $X$, we have

$$\theta^{k}(L)=1+L+\cdots +L^{\otimes k-1};$$

(2) For any short exact sequence $0\longrightarrow E^{'}\longrightarrow E\longrightarrow E^{''}\longrightarrow 0$ of locally free coherent
sheaves on $X$ we have

         $$\theta^{k}(E)=\theta^{k}(E^{'})\otimes\theta^{k} (E^{''});$$

(3) For any morphism of schemes $g : X^{'}\longrightarrow X$ and any locally free coherent sheaf $E$ over $X$ we have

                  $$g^{*}(\theta^{k}(E))=\theta^{k}(g^{*}(E)).$$

If $E$ is a locally free coherent sheaf on a quasi-compact scheme $X$, then the element $\theta^{k}(E)$ is often called the $k$-th Bott element.
\end{defi}

\begin{pro}
There exists a unique operation $\theta^{k}$ satisfying properties (1)-(3) above.
\end{pro}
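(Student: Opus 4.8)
The plan is to establish uniqueness through the splitting principle and existence through a universal symmetric-function construction built from the exterior-power operations $\lambda^{j}(E)=[\wedge^{j}E]$ on $K_{0}$.

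For uniqueness, I would fix a locally free coherent sheaf $E$ on $X$ and pass to the associated flag bundle $\pi:\mathrm{Fl}(E)\to X$. The pullback $\pi^{*}:K_{0}(X)\to K_{0}(\mathrm{Fl}(E))$ is (split) injective, and $\pi^{*}E$ carries a filtration $0=F_{0}\subset F_{1}\subset\cdots\subset F_{r}=\pi^{*}E$ whose successive quotients $L_{i}=F_{i}/F_{i-1}$ are invertible sheaves, giving short exact sequences $0\to F_{i-1}\to F_{i}\to L_{i}\to 0$. Property (3) gives $\pi^{*}(\theta^{k}(E))=\theta^{k}(\pi^{*}E)$; iterating property (2) along the filtration yields $\theta^{k}(\pi^{*}E)=\prod_{i=1}^{r}\theta^{k}(L_{i})$; and property (1) evaluates each factor as $1+L_{i}+\cdots+L_{i}^{\otimes(k-1)}$. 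Thus $\pi^{*}(\theta^{k}(E))$ is pinned down entirely by (1)--(3), and injectivity of $\pi^{*}$ forces $\theta^{k}(E)$ to be unique.

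For existence, I would use that $K_{0}(X)$ is a $\lambda$-ring whose total operation $\lambda_{t}(E)=\sum_{j\ge 0}\lambda^{j}(E)\,t^{j}$ is additive-to-multiplicative: for every short exact sequence $0\to E'\to E\to E''\to 0$ the Whitney relation $\lambda_{t}(E)=\lambda_{t}(E')\,\lambda_{t}(E'')$ holds in $K_{0}(X)[[t]]$, so that $\lambda^{n}(E)=\sum_{a+b=n}\lambda^{a}(E')\lambda^{b}(E'')$ in $K_{0}(X)$. The symmetric polynomial $\prod_{i=1}^{r}(1+x_{i}+\cdots+x_{i}^{k-1})$ can be written as a universal polynomial $P_{k,r}$ in the elementary symmetric functions $e_{1},\dots,e_{r}$, and I would define $\theta^{k}(E):=P_{k,r}(\lambda^{1}E,\dots,\lambda^{r}E)$ for $E$ of rank $r$, so that on a sum of line bundles $\sum L_{i}$ one recovers $\prod_{i}(1+L_{i}+\cdots+L_{i}^{\otimes(k-1)})$. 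The normalization $P_{k,r+1}(e_{1},\dots,e_{r},0)=P_{k,r}(e_{1},\dots,e_{r})$ (obtained by setting $x_{r+1}=0$) makes the definition insensitive to adjoining trivial summands, consistently with $\theta^{k}(\mathcal{O}_{X})=k$.

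With this definition I would check the three axioms. Property (1) is immediate from $P_{k,1}(e_{1})=1+e_{1}+\cdots+e_{1}^{k-1}$ and $\lambda^{1}(L)=L$; property (3) is automatic because $g^{*}\wedge^{j}E=\wedge^{j}g^{*}E$, so $g^{*}$ commutes with any polynomial in the $\lambda^{j}$. The crux is property (2): the universal identity $P_{k,r}(e(x))\,P_{k,s}(e(y))=P_{k,r+s}(e(x\cup y))$, coming from $\prod_{i}(\cdots)\prod_{j}(\cdots)=\prod_{\text{combined}}(\cdots)$ and the merging law $e_{n}(x\cup y)=\sum_{a+b=n}e_{a}(x)e_{b}(y)$, becomes exactly $\theta^{k}(E)=\theta^{k}(E')\,\theta^{k}(E'')$ once we substitute $e_{a}(x)=\lambda^{a}(E')$, $e_{b}(y)=\lambda^{b}(E'')$, $e_{n}(x\cup y)=\lambda^{n}(E)$ via the Whitney relation. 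I expect the main obstacle to be precisely this verification of (2) for non-split sequences: it relies both on the (standard but nontrivial) fact that the $\lambda$-operations descend to $K_{0}$ so that the Whitney relation holds for arbitrary exact sequences, and on the careful matching of the rank bookkeeping in $P_{k,r}$ so that the universal symmetric-function identity transfers cleanly to $K_{0}(X)$. Everything else, being either a direct evaluation or a naturality statement, is routine.
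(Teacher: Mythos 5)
Your proposal is correct and is essentially the argument the paper delegates to its references: the paper's ``proof'' is only a citation to Manin (Lemma 16.2) and SGA~6, Exp.~VII, where $\theta^{k}$ is constructed exactly as you do, via the universal symmetric polynomial $\prod_{i}(1+x_{i}+\cdots+x_{i}^{k-1})$ evaluated on the $\lambda$-operations, with uniqueness forced by the splitting principle. Your identification of the verification of multiplicativity (2) via the Whitney relation as the only nontrivial point is also where the cited proofs put the work, so there is no gap to report.
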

\begin{proof}
See \cite{Man}. Lemma 16.2. Subsection 16, or SGA, VII.
\end{proof}
\begin{pro}\label{ad,1}
For any scheme $X$ and a positive integer $k\geq 1$, the $k$-th Adams operation is the functorial endomorphism $\psi^{k}_{X}$  (Usually, $\psi^{k}_{X}$ will be denoted by $\psi^{k}$ if there is no confuse) of unitary ring $K_{0}(X)$ which is uniquely determined by the following two conditions,

(1) $\psi^{k}_{X}f^{*}=f^{*}\psi^{k}_{Y}$ for any morphism of Noetherian schemes $f:X\longrightarrow Y$. 

(2) For any invertible sheaf $L$ over $X$, $\psi^{k}(L)=L^{\otimes k}$.
\end{pro}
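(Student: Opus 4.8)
The plan is to build $\psi^{k}$ out of the $\lambda$-ring structure on $K_{0}(X)$ and to lean on the splitting principle for both existence and uniqueness. Recall that exterior powers furnish operations $\lambda^{i}(E)=[\wedge^{i}E]$; the decomposition $\wedge^{m}(E'\oplus E'')\cong\bigoplus_{a+b=m}\wedge^{a}E'\otimes\wedge^{b}E''$ shows that the total operation $\lambda_{t}(E)=\sum_{i\geq 0}\lambda^{i}(E)\,t^{i}$ is multiplicative on short exact sequences, hence descends to a well-defined map $\lambda_{t}:K_{0}(X)\to 1+t\,K_{0}(X)[[t]]$ satisfying $\lambda_{t}(E+F)=\lambda_{t}(E)\lambda_{t}(F)$.

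First I would \emph{define} $\psi^{k}$ through the Newton generating function
$$t\frac{d}{dt}\log\lambda_{t}(E)=\sum_{k\geq 1}(-1)^{k-1}\psi^{k}(E)\,t^{k},$$
equivalently $\psi^{k}=N_{k}(\lambda^{1},\ldots,\lambda^{k})$ for the universal integer Newton polynomial $N_{k}$ expressing the $k$-th power sum in terms of the elementary symmetric functions. Condition (2) is then immediate: for a line bundle $L$ one has $\lambda_{t}(L)=1+Lt$, so the defining series yields $\psi^{k}(L)=L^{\otimes k}$. Condition (1) follows because $f^{*}$ commutes with exterior powers and is a ring homomorphism, while $N_{k}$ has integer coefficients, so $f^{*}\psi^{k}=\psi^{k}f^{*}$.

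Next I would check that $\psi^{k}$ is a ring endomorphism. Additivity $\psi^{k}(E+F)=\psi^{k}(E)+\psi^{k}(F)$ drops straight out of the generating function, since the logarithm of the product $\lambda_{t}(E)\lambda_{t}(F)$ is the sum of the logarithms. Multiplicativity $\psi^{k}(E\otimes F)=\psi^{k}(E)\psi^{k}(F)$ is the step that genuinely needs the splitting principle: after pulling back along an iterated projective (flag) bundle $\pi:F_{\bullet}\to X$ one may write $\pi^{*}E=\sum_{i}L_{i}$ and $\pi^{*}F=\sum_{j}M_{j}$ as sums of line bundles in $K_{0}(F_{\bullet})$, whence $\pi^{*}(E\otimes F)=\sum_{i,j}L_{i}\otimes M_{j}$ and, using (2) together with additivity,
$$\psi^{k}\Big(\sum_{i,j}L_{i}\otimes M_{j}\Big)=\sum_{i,j}(L_{i}\otimes M_{j})^{\otimes k}=\Big(\sum_{i}L_{i}^{\otimes k}\Big)\Big(\sum_{j}M_{j}^{\otimes k}\Big)=\psi^{k}(\pi^{*}E)\,\psi^{k}(\pi^{*}F).$$
Since $\pi^{*}$ is injective (the splitting principle produces a split injection of $K$-groups) and already commutes with $\psi^{k}$ by (1), multiplicativity descends to $X$.

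Finally, uniqueness rests on the same two ingredients. If $\psi'$ is any ring endomorphism satisfying (1) and (2), it agrees with $\psi^{k}$ on line bundles by (2). For an arbitrary $E$, pull back to a splitting space so that $\pi^{*}E=\sum_{i}L_{i}$; both operations being ring homomorphisms, $\psi'(\pi^{*}E)=\sum_{i}L_{i}^{\otimes k}=\psi^{k}(\pi^{*}E)$, and then (1) combined with injectivity of $\pi^{*}$ forces $\psi'(E)=\psi^{k}(E)$. I expect the main obstacle to be precisely the multiplicativity step: one must set up the splitting principle carefully, namely that there is a morphism with split-injective pullback along which $E$ becomes a sum of line bundles, and keep the $\lambda$-ring bookkeeping straight, whereas additivity, functoriality, and the value on line bundles are essentially formal consequences of the generating-function definition.
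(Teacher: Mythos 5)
Your construction is correct and is essentially the standard one: the paper gives no argument of its own here, deferring to Manin (Subsection 11) and SGA~6, where the Adams operations are likewise defined by the Newton polynomials $N_k(\lambda^1,\dots,\lambda^k)$ in the exterior-power operations, with multiplicativity and uniqueness established by the splitting principle exactly as you do. Your generating-function bookkeeping, the verification of (1) and (2), and the use of split injectivity of $\pi^{*}$ all match that cited proof, so there is nothing to add beyond noting that the quasi-compactness/Noetherian hypotheses the paper imposes on its schemes are what make the projective-bundle formula, and hence your splitting-principle step, available.
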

\begin{proof}
See \cite{Man}, Subsection 11.
\end{proof}
\begin{theo}(Adams-Riemann Roch theorem.)\label{Arr}
We assume that $f:X\rightarrow Y$ is a projective local complete intersection morphism and $Y$ is a quasi-projective scheme 
 over an affine scheme.  Let $\Omega_{f}$ be the relative differentials of $f$.
Then the equality $$\psi^{k}(R^{\bullet}f_{*}(E))=R^{\bullet}f_{*}(\theta ^{k}(\Omega_{f})^{-1}\otimes\psi^{k}(E))$$ for any integer $k\geq 1$ holds
in $K_{0}(Y)\otimes \mathbb{Z}[\frac{1}{k}]$.
\end{theo}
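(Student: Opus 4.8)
The plan is to follow the classical two-step reduction used for Riemann--Roch statements, built directly on the definition of a local complete intersection morphism. Because $f$ is projective and $Y$ is quasi-projective over an affine scheme, $Y$ carries an ample invertible sheaf, so $f$ factors as a closed immersion $i:X\hookrightarrow P=\mathbb{P}^{N}_{Y}$ followed by the structural projection $\pi:\mathbb{P}^{N}_{Y}\rightarrow Y$; the lci hypothesis forces $i$ to be a regular immersion, and $\pi$ is smooth. Both factors are again projective lci morphisms. The strategy is to establish the identity of Theorem~\ref{Arr} separately for $\pi$ and for $i$, and then to glue the two cases along $f=\pi\circ i$. I work throughout in $K_{0}(-)\otimes\mathbb{Z}[\frac{1}{k}]$: inverting $k$ is precisely what makes $\theta^{k}(\Omega_{f})$ invertible, since for a line bundle $L$ the rank of $\theta^{k}(L)=1+L+\cdots+L^{\otimes k-1}$ is $k$, and by the splitting principle together with property~(2) of Definition~\ref{ad,2} the class $\theta^{k}(E)$ is then a unit for every locally free $E$.

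The gluing step is formal once the two cases are known. I would use the cotangent exact sequence $0\rightarrow i^{*}\Omega_{\pi}\rightarrow\Omega_{f}\rightarrow\Omega_{i}\rightarrow 0$, so that property~(2) of the Bott class gives $\theta^{k}(\Omega_{f})=\theta^{k}(\Omega_{i})\otimes i^{*}\theta^{k}(\Omega_{\pi})$, while property~(3) says $\theta^{k}$ commutes with $i^{*}$. Writing $R^{\bullet}f_{*}=R^{\bullet}\pi_{*}\circ R^{\bullet}i_{*}$ and substituting the identity for $i$ inside the identity for $\pi$, the only nontrivial manoeuvre is to move the factor $i^{*}\theta^{k}(\Omega_{\pi})^{-1}$ across $R^{\bullet}i_{*}$ by the projection formula $R^{\bullet}i_{*}(\alpha\otimes i^{*}\beta)=R^{\bullet}i_{*}(\alpha)\otimes\beta$, after which the product of the two Bott factors recombines into $\theta^{k}(\Omega_{f})^{-1}$. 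Here I also use that $\psi^{k}$ is a ring endomorphism commuting with pullback (Proposition~\ref{ad,1}).

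The projection case $\pi:\mathbb{P}^{N}_{Y}\rightarrow Y$ I would settle by the projective bundle formula, which presents $K_{0}(\mathbb{P}^{N}_{Y})$ as the $K_{0}(Y)$-algebra generated by the class of $\mathcal{O}(1)$. By the projection formula both sides of the asserted identity are $K_{0}(Y)$-linear, so it suffices to check it on the generators $[\mathcal{O}(m)]$. This is an explicit finite computation: the Euler exact sequence controls $\Omega_{\pi}$ and hence $\theta^{k}(\Omega_{\pi})$, and the groups $R^{\bullet}\pi_{*}\mathcal{O}(m)$ are classically known, so both sides can be compared termwise.

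The regular immersion $i:X\hookrightarrow P$ is where I expect the real difficulty, and it is the step that forces the deformation to the normal cone. The virtual cotangent class of $i$ is $-[N^{\vee}]$, the conormal bundle $N^{\vee}=\mathcal{I}/\mathcal{I}^{2}$ placed in negative degree, so $\theta^{k}(\Omega_{i})^{-1}=\theta^{k}(N^{\vee})$. Deforming $i$ to the zero-section immersion into the total space of the normal bundle, and checking that both sides are invariant under the deformation, reduces the claim to a Koszul computation: there $i_{*}\mathcal{O}_{X}$ is resolved by $\wedge^{\bullet}N^{\vee}$, so its class is $\lambda_{-1}(N^{\vee})=\sum_{j}(-1)^{j}[\wedge^{j}N^{\vee}]$, and the identity reduces to $\psi^{k}(\lambda_{-1}(N^{\vee}))=\lambda_{-1}(N^{\vee})\otimes\theta^{k}(N^{\vee})$, which by the splitting principle follows from the line-bundle identity $(1-L)(1+L+\cdots+L^{\otimes k-1})=1-L^{\otimes k}$. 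Executing the Koszul bookkeeping and, above all, verifying deformation-invariance is the technically heaviest part; I would invoke \cite{FL} and \cite{Berth} for the deformation-to-the-normal-cone machinery rather than reproduce it, and then assemble the three pieces by the gluing argument of the second paragraph to obtain Theorem~\ref{Arr}.
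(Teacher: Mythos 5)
The paper offers no argument of its own for Theorem~\ref{Arr}: its proof is the single citation ``See \cite{FL}, V.~Th.~7.6.'' Your outline --- factoring $f$ through $\mathbb{P}^{N}_{Y}$, handling the smooth projection by the projective bundle formula, handling the regular immersion by deformation to the normal cone together with the Koszul identity $\psi^{k}(\lambda_{-1}(N^{\vee}))=\lambda_{-1}(N^{\vee})\otimes\theta^{k}(N^{\vee})$, and gluing via the cotangent sequence and the projection formula --- is precisely the classical strategy of that cited reference, which the paper itself summarizes in its introduction, so your proposal matches the intended proof.
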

\begin{proof}   See \cite{FL}, V. Th. 7.6.
\end{proof}

\subsection{The Adams Riemann Roch Theorem in positive characteristic}\label{section ARR char p}
In this part, we will provide all the necessary knowledge for the Adams-Riemann-Roch theorem in positive characteristic. But we will not repeat the proof 
which is known. The all ingredient will be used in Section 4.

On a quasi-compact scheme of characteristic $p> 0$, Pink and R\"{o}ssler constructed an explicit representative of the $p$-th Bott element (see \cite{Pi}, Sect. 2). 

We recall the construction:

Let $p$ be a prime number and $Z$ a scheme of characteristic $p$. Let $E$ be a locally free coherent sheaf $Z$. For any integer $k\geq 0$ let
$\text{Sym}^{k}(E)$ denote the $k$-th symmetric power of $E$. Then 
$$\text{Sym}(E):=\bigoplus _{k\geq 0}\text{Sym}^{k}(E)$$ is quasi-coherent graded $\mathcal{O}_{Z}$-algebra, called the symmetric algebra of $E$. Let
$\mathcal{J}_{E}$ denote the graded sheaf of ideals of $\text{Sym}(E)$ that is locally generated by the sections $e^{p}$ of $\text{Sym}^{p}(E)$ for
all sections $e$ of $E$, and set $$\tau(E):=\text{Sym}(E)/\mathcal{J}_{E}.$$
Locally this construction means the following. Consider an open subset $U\subset Z$  such that $E|_{U}$ is free, and
choose a basis $e_{1},\ldots, e_{r}$. Then $\text{Sym}(E)|_{U}$ is the polynomial algebra over $\mathcal{O}_{Z}$ in the variables $e_{1},\ldots,e_{r}$.
Since $Z$ has characteristic $p$, for any open subset $V\subset U$ and any sections $a_{1},\ldots,a_{r} \in \mathcal{O}_{Z}(V)$ we have 
$$(a_{1}e_{1}+\ldots +a_{r}e_{r})^{p}=a_{1}^{p}e_{1}^{p}+\ldots +a_{r}^{p}e_{r}^{p}.$$
It follows that $\mathcal{J}_{E}|_{U}$ is the sheaf of ideals of $\text{Sym}(E)|_{U}$ that  is generated by $e_{1}^{p}\ldots,e_{r}^{p}$. Clearly that description is independent of
the choice of basis and compatible with localization; hence it can be used as an equivalent definition of $\mathcal{J}_{E}$ and $\tau(E)$.
The local description also implies that $\tau(E)|_{U}$ is free over $\mathcal{Z}|_{U}$ with the basis the images of the monomials $e_{1}^{i_{1}}\cdots e_{r}^{i_{r}}$ for all choices of exponents $0\leqq i_{j}< p$. 

It can be showed that $\tau(E)$ satisfies the defining properties of the $p$-th Bott element. In other words, we have the following proposition (see \cite{Pi}, Prop. 2.6).

\begin{pro}\label{btp}
For any locally free coherent sheaf $E$ of finite rank on a scheme $Z$ of characteristic $p>0$, we have $\tau(E)=\theta ^{p}(E)$ in $K_{0}(Z)$.
\end{pro}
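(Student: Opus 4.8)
The plan is to exploit the uniqueness of the $p$-th Bott element: by Definition \ref{ad,2} and the uniqueness statement immediately following it, $\theta^{p}$ is the \emph{only} operation sending each locally free coherent sheaf to a class in $K_{0}(Z)$ and satisfying the three properties (1)--(3) listed there. Hence it suffices to check that the assignment $E \mapsto [\tau(E)]$ enjoys these same three properties. Note first that this assignment lands in $K_{0}(Z)$: the local description recalled above shows that $\tau(E)|_{U}$ is free over $\mathcal{O}_{Z}|_{U}$ with basis the images of the monomials $e_{1}^{i_{1}}\cdots e_{r}^{i_{r}}$ with $0\leq i_{j}<p$, so $\tau(E)$ is itself a locally free coherent sheaf and $[\tau(E)]\in K_{0}(Z)$ is well defined.

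Properties (1) and (3) are the easy steps. For an invertible sheaf $L$ we have $\text{Sym}(L)=\bigoplus_{k\geq 0}L^{\otimes k}$ and $\mathcal{J}_{L}$ is generated by the single section spanning $L^{\otimes p}$, whence $\tau(L)=\bigoplus_{k=0}^{p-1}L^{\otimes k}$; in $K_{0}(Z)$ this is exactly $1+L+\cdots +L^{\otimes p-1}$, which is property (1). For property (3), the formation of the symmetric algebra commutes with any base change $g\colon Z'\to Z$, and the defining local generators $e_{i}^{p}$ of $\mathcal{J}_{E}$ pull back to the corresponding generators of $\mathcal{J}_{g^{*}E}$; using the local monomial basis one checks that $g^{*}\tau(E)\cong \tau(g^{*}E)$, so $g^{*}[\tau(E)]=[\tau(g^{*}E)]$.

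The substantive step is property (2): given a short exact sequence $0\to E'\to E\to E''\to 0$ of locally free coherent sheaves I must show $[\tau(E)]=[\tau(E')]\cdot[\tau(E'')]$ in $K_{0}(Z)$. Locally the sequence splits, so I may choose an open $U$ and a basis $e_{1},\dots,e_{s}$ of $E'|_{U}$ completed to a basis $e_{1},\dots,e_{r}$ of $E|_{U}$ lifting a basis of $E''|_{U}$. The characteristic-$p$ identity $(\sum a_{i}e_{i})^{p}=\sum a_{i}^{p}e_{i}^{p}$ recalled above shows that $\mathcal{J}_{E}|_{U}$ is generated by $e_{1}^{p},\dots,e_{r}^{p}$, that is, by the generators of $\mathcal{J}_{E'}$ and of $\mathcal{J}_{E''}$ together; consequently there is a local isomorphism $\tau(E)|_{U}\cong \tau(E')|_{U}\otimes \tau(E'')|_{U}$. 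To turn this local identity into an identity of $K_{0}$-classes I would use the natural filtration of $\text{Sym}(E)$ induced by the subbundle $E'\subseteq E$, whose associated graded is $\text{Sym}(E')\otimes \text{Sym}(E'')$; this filtration is compatible with $\mathcal{J}_{E}$ and descends to a filtration of $\tau(E)$ by locally free coherent subsheaves whose associated graded is globally $\tau(E')\otimes \tau(E'')$. Since in $K_{0}(Z)$ a filtered locally free sheaf equals the sum of its graded pieces and the tensor product realizes the ring multiplication, this yields $[\tau(E)]=[\tau(E')\otimes\tau(E'')]=[\tau(E')]\cdot[\tau(E'')]$, which is property (2).

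The main obstacle is precisely this last globalization: the isomorphism $\tau(E)\cong\tau(E')\otimes\tau(E'')$ exists only locally and need not glue, so the argument must replace it by the globally defined filtration and verify that the Frobenius identity $(e'+e'')^{p}=e'^{p}+e''^{p}$ makes $\mathcal{J}_{E}$ respect that filtration, so that after passing to the quotient $\tau$ the associated graded really is $\tau(E')\otimes\tau(E'')$ rather than some other extension of it. Once properties (1)--(3) are verified, the uniqueness of $\theta^{p}$ forces $[\tau(E)]=\theta^{p}(E)$ in $K_{0}(Z)$, completing the proof.
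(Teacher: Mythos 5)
Your proof is correct and takes essentially the same approach as the paper: the paper gives no argument of its own but states that ``it can be showed that $\tau(E)$ satisfies the defining properties of the $p$-th Bott element'' and cites Pink--R\"ossler, Prop.~2.6, whose proof is exactly your verification of properties (1)--(3) (with the filtration of $\mathrm{Sym}(E)$ induced by $E'\subseteq E$ handling multiplicativity) followed by the uniqueness of $\theta^{p}$.
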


In the case of positive characteristic, we introduce the absolute Frobenius morphism and the relative Frobenius morphism which play a vital role in the proof of the Adams-Riemman-Roch theorem. 
The absolute Frobenius morphism is a morphism of schemes of positive characteristic from a scheme $X$ to itself, usually denoted by 
 $F_{X}$. On topological space, it is identity map and it is locally given $x\rightarrow x^{p}$ on ringed sheaves (See \cite{Liu}). Moreover, 
given a morphism of schemes of positive characteristic, say $f: X\rightarrow Y$, then there is  a commutative diagram from the morphism 
$f$
$$\xymatrix{X\ar@/_/[ddr]_{f}\ar@{.>}[dr]|{F_{X/Y}}\ar@/^/[rrd]^{F_{X}}&  & \\
                            & X^{'}\ar[r]_{J}\ar[d]^{f^{'}}&X\ar[d]^{f}    \\
                             &Y\ar[r]^{F_{Y}}&Y
                                 }~~~~~(4)$$
 where $ F_{X}$ and $F_{Y}$ are obvious absolute Frobenius morphisms respectively and $X^{'}$ is a the fiber product of the structure morphism by base change $F_{Y}$. Furthermore,  in the diagram above, $F_{X/Y}$ is called the relative Frobenius morphism of $f$, 
 which comes from the universality of the fiber product $X^{'}$ since the external diagram of the diagram (4) is obviously commutative.   We also denote the relative Frobenius morphism of the morphism $f$ by $F:=F_{X/Y}$ for
simplicity.  We will use these notations in the following propositions and proofs until the end of the paper,.

The following lemma basically due to Kunz describes the structure morphism of schemes how to determine the relative Frobenius morphism:
\begin{lem}\label{l,f} Let $S$ be a scheme of positive characteristic (say $p$) and $f:X\rightarrow S$ a smooth morphism of pure relative dimension $n$. Then the relative Frobenius $F_{X/S}$ is finite and flat, and the $\mathcal{O}_{X^{'}}-$algebra $(F_{X/S})_{*}\mathcal{O}_{X}$ is locally free of rank $p^{n}$. In particular, if $f$ is an \'{e}tale
morphism, then $F_{X/S}$ is an isomorphism.
\end{lem}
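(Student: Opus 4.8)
The plan is to reduce everything to a direct computation on affine space, using that the three asserted properties---finiteness, flatness, and local freeness of rank $p^{n}$ of the pushforward $(F_{X/S})_{*}\mathcal{O}_{X}$---are local on both $X$ and $S$. So I may assume $S=\text{Spec}(R)$ with $R$ of characteristic $p$, and, since $f$ is smooth of relative dimension $n$, I may invoke the local structure theory of smooth morphisms to cover $X$ by open subschemes $U$ each admitting an \'{e}tale $S$-morphism $g:U\to \mathbb{A}^{n}_{S}$. The proof then rests on three ingredients: (I) that $F:=F_{X/S}$ is an isomorphism when $f$ is \'{e}tale; (II) an explicit computation of $F_{\mathbb{A}^{n}_{S}/S}$; and (III) a base-change argument along $g$ that transports the conclusion from $\mathbb{A}^{n}_{S}$ to $U$.

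For (I), suppose $f$ is \'{e}tale. Then both $X$ and the twist $X'=X\times_{S,F_{S}}S$ are \'{e}tale over $S$ (the latter as a base change of $f$ along the absolute Frobenius $F_{S}$), so the $S$-morphism $F$ is a morphism between \'{e}tale $S$-schemes and is therefore itself \'{e}tale. On the other hand, $F$ is a universal homeomorphism: the absolute Frobenius $F_{S}$ is one, hence so is its base change $J:X'\to X$, and since $F_{X}=J\circ F$ is a universal homeomorphism (the absolute Frobenius of any scheme of characteristic $p$ is), $F$ must be one as well. An \'{e}tale universal homeomorphism is an isomorphism, which proves (I) and, in particular, the final assertion of the lemma (the case $n=0$).

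For (II), writing $\mathbb{A}^{n}_{S}=\text{Spec}(R[t_{1},\dots,t_{n}])$, the twist is $\text{Spec}(R[t_{1},\dots,t_{n}]\otimes_{R,F_{R}}R)$, which as an $R$-algebra is again a polynomial ring $R[t_{1}',\dots,t_{n}']$, and $F_{\mathbb{A}^{n}_{S}/S}$ corresponds to the $R$-algebra map $t_{i}'\mapsto t_{i}^{p}$. Thus $R[t_{1},\dots,t_{n}]$ is free over its subalgebra $R[t_{1}^{p},\dots,t_{n}^{p}]$ on the monomials $t_{1}^{a_{1}}\cdots t_{n}^{a_{n}}$ with $0\le a_{i}<p$, of which there are exactly $p^{n}$; this is precisely the local basis already met in the construction of $\tau(E)$. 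Hence $F_{\mathbb{A}^{n}_{S}/S}$ is finite and flat and its pushforward of the structure sheaf is locally free of rank $p^{n}$.

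To combine these, I would show that for the \'{e}tale $S$-morphism $g:U\to \mathbb{A}^{n}_{S}$ the functoriality square relating $F_{U/S}$ and $F_{\mathbb{A}^{n}_{S}/S}$ is Cartesian. Concretely, forming $P:=\mathbb{A}^{n}_{S}\times_{(\mathbb{A}^{n}_{S})',\,g'}U'$, where primes denote twists and $g'$ is the base change of $g$, one identifies $P$ with the twist $U\times_{g,\mathbb{A}^{n}_{S},F_{\mathbb{A}^{n}_{S}}}\mathbb{A}^{n}_{S}$ of $U$ relative to $\mathbb{A}^{n}_{S}$, under which the canonical map $U\to P$ becomes the relative Frobenius $F_{U/\mathbb{A}^{n}_{S}}$; since $g$ is \'{e}tale, (I) applied to $g$ shows this map is an isomorphism, i.e. the square is Cartesian. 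Therefore $F_{U/S}$ is the base change of $F_{\mathbb{A}^{n}_{S}/S}$ along the flat map $g'$, and the three properties are preserved: finiteness and flatness are stable under base change, while flat base change for the pushforward of the finite flat morphism $F_{\mathbb{A}^{n}_{S}/S}$ gives $(F_{U/S})_{*}\mathcal{O}_{U}\cong g'^{*}\big((F_{\mathbb{A}^{n}_{S}/S})_{*}\mathcal{O}_{\mathbb{A}^{n}_{S}}\big)$, locally free of rank $p^{n}$. Gluing over the \'{e}tale charts $U$ covering $X$ then yields the lemma. The main obstacle is step (I)---equivalently the Cartesian-square property---since this is where the characteristic-$p$ phenomenon that Frobenius is a universal homeomorphism must be married to the rigidity of \'{e}tale maps; once it is in hand, the remainder is the explicit polynomial computation of (II) together with formal base-change bookkeeping.
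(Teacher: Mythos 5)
Your argument is correct and essentially complete. Be aware, however, that the paper itself contains no proof of this lemma: the author explicitly defers to the preprint \cite{Qxu} and to Kunz (Thm.~15.7 of \cite{Kunz}), so there is no in-text argument to measure yours against. What you give is the standard geometric proof, and all of its load-bearing steps check out: (i) for \'{e}tale $g$ the relative Frobenius is \'{e}tale (a morphism between \'{e}tale $S$-schemes is \'{e}tale) and a universal homeomorphism (since $F_{X}=J\circ F_{X/S}$ and $J$ are universal homeomorphisms), and an \'{e}tale universal homeomorphism is indeed an isomorphism (universally injective \'{e}tale $\Rightarrow$ open immersion, plus surjectivity); (ii) the explicit freeness of $R[t_{1},\dots,t_{n}]$ over $R[t_{1}^{p},\dots,t_{n}^{p}]$ on the $p^{n}$ monomials with exponents $<p$ is exactly the basis the paper already uses for $\tau(E)$; (iii) the identification of $\mathbb{A}^{n}_{S}\times_{(\mathbb{A}^{n}_{S})'}U'$ with the twist of $U$ relative to $\mathbb{A}^{n}_{S}$, under which the comparison map becomes $F_{U/\mathbb{A}^{n}_{S}}$, is the correct functor-of-points computation, so the Frobenius square for an \'{e}tale morphism is Cartesian and $F_{U/S}$ is pulled back from $F_{\mathbb{A}^{n}_{S}/S}$ along the (\'{e}tale, hence flat) map $g'$; since $F_{\mathbb{A}^{n}_{S}/S}$ is affine, the pushforward of $\mathcal{O}$ commutes with this base change, giving local freeness of rank $p^{n}$, and the opens $U'$ do cover $X'$ so the statement glues. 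By contrast, the reference the paper leans on (Kunz) reaches the same conclusion by commutative algebra, essentially reducing to the structure of smooth algebras and the behaviour of the Frobenius on regular sequences; your \'{e}tale-chart argument buys a cleaner global statement (Cartesianness of the Frobenius square along \'{e}tale maps) at the cost of invoking the local structure theorem for smooth morphisms and the rigidity of \'{e}tale universal homeomorphisms, while the algebraic route is more self-contained at the level of rings. Either way, your proposal would serve as a legitimate replacement for the missing proof.
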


The lemma is well-known and featured extensively in the literature. However, its proof is usually not provided. I provide a complete proof by collecting all related known facts in the preprint (See \cite{Qxu}) (or see Thm15.7, \cite{Kunz}).

Since the pull-back $F^{*}$ is adjoint to $F_{*}$ (see \cite{Hart}, Page 110), there is a natural morphism of $\mathcal{O}_{X}$-algebras $F^{*}F_{*}\mathcal{O}_{X}\rightarrow\mathcal{O}_{X}$. Let $I$
be the kernel of the natural morphism. In \cite{Pi}, the following definition is made
$$Gr(F^{*}F_{*}\mathcal{O}_{X}):=\bigoplus_ {k\geq 0}I^{k}/ I^{k+1}$$
which is the associated graded sheaf of $\mathcal{O}_{X}$-algebras. Let $\Omega_{f}$ be the sheaf of relative differentials of $f$.
Also, they proved the following key proposition which can be used to prove the $p$-th Adams-Riemann-Roch theorem in positive characteristic (see \cite{Pi}, Prop. 3.2).
\begin{pro}\label{gci}
There is a natural isomorphism of $\mathcal{O}_{X}$-modules
$$ I/ I^{2}\cong \Omega_{f}$$
and a natural isomorphism of graded  $\mathcal{O}_{X}$-algebras
$$\tau(I/ I^{2})\cong Gr(F^{*}F_{*}\mathcal{O}_{X})$$
 \end{pro}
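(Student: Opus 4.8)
The plan is to prove both isomorphisms by reducing to an explicit affine-local model and then checking that the maps written down are canonical, so that they glue to global natural isomorphisms. Since $f$ is smooth of relative dimension $n$, work over an affine open $\mathrm{Spec}\,R\subseteq S$ and an affine open $U=\mathrm{Spec}\,B\subseteq X$ carrying étale coordinates $t_{1},\dots,t_{n}$, i.e.\ an étale $R$-morphism $U\to\mathbb{A}^{n}_{R}$; then $\Omega_{f}|_{U}$ is free on $dt_{1},\dots,dt_{n}$. Writing $B'=B\otimes_{R,F_{R}}R$ for the affine model of the Frobenius twist $X'$, the relative Frobenius $F=F_{X/S}$ corresponds to the $R$-algebra map $B'\to B$ sending $t_{i}'\mapsto t_{i}^{p}$. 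Because $F$ is finite (Lemma \ref{l,f}), $F_{*}\mathcal{O}_{X}$ is the $B'$-module $B$ and $F^{*}F_{*}\mathcal{O}_{X}$ is locally the $B$-algebra $B\otimes_{B'}B$, with the counit $F^{*}F_{*}\mathcal{O}_{X}\to\mathcal{O}_{X}$ given by multiplication $\mu\colon B\otimes_{B'}B\to B$. Hence $I$ is locally the diagonal ideal $\ker\mu$.

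\textbf{First isomorphism.} The general identification of the conormal module of the diagonal with Kähler differentials yields a canonical isomorphism $I/I^{2}\cong\Omega_{X/X'}$, realised by the derivation $a\mapsto\overline{1\otimes a-a\otimes 1}$. It then remains to identify $\Omega_{X/X'}$ with $\Omega_{f}=\Omega_{X/S}$. For this I use the cotangent sequence of $X\xrightarrow{F}X'\to S$,
$$F^{*}\Omega_{X'/S}\longrightarrow\Omega_{X/S}\longrightarrow\Omega_{X/X'}\longrightarrow 0,$$
together with the characteristic-$p$ vanishing of its first arrow: locally $F^{*}(dt_{i}')$ maps to $d(t_{i}^{p})=p\,t_{i}^{p-1}dt_{i}=0$. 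Thus $\Omega_{X/S}\xrightarrow{\sim}\Omega_{X/X'}$, and composing gives the natural isomorphism $I/I^{2}\cong\Omega_{f}$.

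\textbf{Second isomorphism.} For the $I$-adic filtration the associated graded $Gr(F^{*}F_{*}\mathcal{O}_{X})$ is a graded $\mathcal{O}_{X}$-algebra with $Gr^{0}\cong\mathcal{O}_{X}$ and $Gr^{1}=I/I^{2}$, and since $I^{k}=I\cdot I^{k-1}$ it is generated in degree $1$; hence there is a canonical surjection $\mathrm{Sym}_{\mathcal{O}_{X}}(I/I^{2})\twoheadrightarrow Gr(F^{*}F_{*}\mathcal{O}_{X})$. The key point is that this surjection kills the ideal $\mathcal{J}_{I/I^{2}}$ defining $\tau$: for a local section $e$ of $I/I^{2}$ with lift $\tilde e\in I$, the image of $e^{p}$ is the class of $\tilde e^{\,p}$ in $I^{p}/I^{p+1}$, and writing $\tilde e\equiv\sum_{i}b_{i}x_{i}\pmod{I^{2}}$ with $x_{i}:=1\otimes t_{i}-t_{i}\otimes 1$ one computes, using $(a+b)^{p}=a^{p}+b^{p}$ in characteristic $p$, that $\tilde e^{\,p}\in I^{p+1}$. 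Indeed $x_{i}^{p}=1\otimes t_{i}^{p}-t_{i}^{p}\otimes 1=0$ because $t_{i}^{p}$ is the image of $t_{i}'\in B'$ and therefore passes across the tensor product over $\mathcal{O}_{X'}$, while the $I^{2}$-part contributes a term in $I^{2p}\subseteq I^{p+1}$. Consequently the map factors through a morphism of graded algebras $\tau(I/I^{2})\to Gr(F^{*}F_{*}\mathcal{O}_{X})$. Finally both sides are locally free $\mathcal{O}_{X}$-modules of rank $p^{n}$ — the source because $\tau$ of a rank-$n$ free module is the truncated polynomial algebra $\mathcal{O}_{X}[x_{i}]/(x_{i}^{p})$, and the target by Lemma \ref{l,f} — so a surjection between them is an isomorphism.

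\textbf{Main obstacle.} The only genuinely nontrivial input is the positive-characteristic phenomenon $x_{i}^{p}=0$ in $F^{*}F_{*}\mathcal{O}_{X}$ (equivalently $\tilde e^{\,p}\in I^{p+1}$ for $e\in I$): this is exactly what forces the $p$-th-power relations of the Bott-class truncation $\tau$ to coincide with the relations of the associated graded, and it is the place where the hypothesis $\mathrm{char}=p$ is essential. Besides this, one must only check that the chosen affine-local identifications are induced by the canonical counit and conormal maps, so that they are independent of the coordinates $t_{i}$ and glue to the asserted global natural isomorphisms.
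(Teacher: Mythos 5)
The paper does not actually prove this proposition: it is quoted verbatim from Pink--R\"ossler and the ``proof'' is the citation to \cite{Pi}, Prop.~3.2. Your argument is a faithful reconstruction of their proof and is essentially correct: the reduction to \'etale coordinates, the identification of $I$ with the diagonal ideal of $B\otimes_{B'}B$, the vanishing of $F^{*}\Omega_{X'/S}\to\Omega_{X/S}$ in characteristic $p$, and above all the relation $(1\otimes t_{i}-t_{i}\otimes 1)^{p}=1\otimes t_{i}^{p}-t_{i}^{p}\otimes 1=0$ (because $t_{i}^{p}$ comes from $B'$) are exactly the key points, and they correctly force the canonical surjection $\mathrm{Sym}(I/I^{2})\twoheadrightarrow Gr(F^{*}F_{*}\mathcal{O}_{X})$ to factor through $\tau(I/I^{2})$. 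The one step stated too quickly is the final rank count: Lemma \ref{l,f} gives that $F^{*}F_{*}\mathcal{O}_{X}$ is locally free of rank $p^{n}$, but it does not by itself give that the associated graded $\bigoplus_{k}I^{k}/I^{k+1}$ is locally free of that rank, which is part of what is being proved. The standard patch is either (i) to note that the $I$-adic filtration is finite (since $I$ is generated by the nilpotents $x_{i}$ with $x_{i}^{p}=0$), so for every point $x$ right-exactness of $\otimes k(x)$ gives $\sum_{k}\dim_{k(x)}(I^{k}/I^{k+1})\otimes k(x)\geq p^{n}$, while your surjection from the rank-$p^{n}$ bundle $\tau(I/I^{2})$ bounds it above by $p^{n}$; equality on all fibers plus finite presentation then forces the surjection to be an isomorphism; or (ii) to argue as Pink--R\"ossler do, proving first that $\mathcal{O}_{X}[T_{1},\dots,T_{n}]/(T_{1}^{p},\dots,T_{n}^{p})\to F^{*}F_{*}\mathcal{O}_{X}$, $T_{i}\mapsto x_{i}$, is an isomorphism (here both sides really are locally free of rank $p^{n}$, so Lemma \ref{l,f} applies directly) and then reading off $I$, $I/I^{2}$ and $Gr$ by inspection. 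With that patch, and the coordinate-independence check you already flag, your proof is complete.
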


According to the proposition above, directly there are isomorphisms $$Gr(F^{*}F_{*}\mathcal{O}_{X})\cong\tau(I/ I^{2})\cong \tau(\Omega_{f}).$$ 
Moreover, Proposition \ref{btp} also
implies $\tau(\Omega_{f})\cong \theta ^{p}(\Omega_{f})$. In the Grothendieck groups, we have $Gr(F^{*}F_{*}\mathcal{O}_{X})\cong F^{*}F_{*}\mathcal{O}_{X}$, then the equality $F^{*}F_{*}\mathcal{O}_{X}= \theta ^{p}(\Omega_{f})$ holds by viewing them as elements of the Grothendieck groups. More important,  it also means that $F^{*}F_{*}\mathcal{O}_{X}\cong \theta ^{p}(\Omega_{f})$ is true in the virtual category $V(X)$, which is really needed in our main result-Thm. \ref{mrs}.

In the case of characteristic $p>0$, the Adams operation $\psi^{p}$ can be explicitly replaced  by the pull-back of the absolute Frobenius morphism. This is the following proposition:
\begin{pro}\label{frebad}
  For a scheme $Z$ of characteristic $p>0$ and its absolute Frobenius morphism $F_{Z}: Z\rightarrow Z$,  then the pullback $F_{Z}^{*}: K_{0}(Z)\rightarrow K_{0}(Z)$ is  just the $p$-th Adams operation $\psi^{p}$. 
\end{pro}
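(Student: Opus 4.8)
The plan is to exploit the uniqueness clause of Proposition \ref{ad,1}: since $\psi^{p}$ is the \emph{only} functorial family of unitary ring endomorphisms of the groups $K_{0}(-)$ that sends each invertible sheaf $L$ to $L^{\otimes p}$, it suffices to check that the family of pullbacks $F_{Z}^{*}$ along the absolute Frobenius morphisms shares these three features. Thus I would verify in turn that each $F_{Z}^{*}$ is a unitary ring endomorphism of $K_{0}(Z)$, that the assignment $Z\mapsto F_{Z}^{*}$ is functorial, and that $F_{Z}^{*}(L)\cong L^{\otimes p}$ for every line bundle $L$.

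First I would record that $F_{Z}^{*}$ is a well-defined endomorphism of the unitary ring $K_{0}(Z)$. This is immediate from fact (1) of Section 2.1: the inverse image of locally free sheaves under any morphism of schemes induces a morphism of unitary rings on $K_{0}$, and here the relevant morphism is simply $F_{Z}:Z\rightarrow Z$. (No flatness of $F_{Z}$ is required, since a short exact sequence of locally free sheaves is locally split and hence stays exact after inverse image.) Next I would check functoriality. For any morphism $g:X\rightarrow Y$ of schemes of characteristic $p$ the absolute Frobenius is natural, that is $g\circ F_{X}=F_{Y}\circ g$: on topological spaces both composites equal $g$ because $F_{X},F_{Y}$ act as the identity there, while on structure sheaves both are obtained by post-composing the $p$-power map with $g^{\#}$, and these agree because $g^{\#}$ is a homomorphism of $\mathbb{F}_{p}$-algebras. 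Passing to $K_{0}$ gives $F_{X}^{*}g^{*}=g^{*}F_{Y}^{*}$, which is exactly condition (1) of Proposition \ref{ad,1}.

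The substantive step, and the one I expect to be the main obstacle, is condition (2): the identity $F_{Z}^{*}L\cong L^{\otimes p}$ for each invertible sheaf $L$. I would argue through transition functions. Choose a trivializing open cover $\{U_{i}\}$ for $L$ with transition functions $g_{ij}\in\mathcal{O}_{Z}^{*}(U_{i}\cap U_{j})$. Because $F_{Z}$ is the identity on the underlying space and acts on sections of $\mathcal{O}_{Z}$ by $a\mapsto a^{p}$, the line bundle $F_{Z}^{*}L$ is trivialized on the same cover with transition functions $F_{Z}^{\#}(g_{ij})=g_{ij}^{p}$. These are precisely the transition functions of $L^{\otimes p}$, whence $F_{Z}^{*}L\cong L^{\otimes p}$.

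Having verified that $\{F_{Z}^{*}\}$ is a functorial family of unitary ring endomorphisms carrying line bundles to their $p$-th tensor powers, the uniqueness in Proposition \ref{ad,1} forces $F_{Z}^{*}=\psi^{p}$, which finishes the proof. The ring-endomorphism and functoriality claims are formal consequences of the facts recalled in Section 2.1 together with the naturality of the absolute Frobenius; the only genuinely delicate point is the line-bundle computation, and even there the characteristic-$p$ additivity of the $p$-power map on sections does all the work.
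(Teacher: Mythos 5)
Your argument is correct and is essentially the argument the paper has in mind: the paper itself gives no proof, only a citation to K\"ock and the remark that the statement is ``a consequence of the splitting principle,'' and your write-up is precisely that splitting-principle argument spelled out. The three verifications (that $F_{Z}^{*}$ is a unitary ring endomorphism of $K_{0}(Z)$, that $g\circ F_{X}=F_{Y}\circ g$ for any morphism $g$ of $\mathbb{F}_{p}$-schemes, and that $F_{Z}^{*}L\cong L^{\otimes p}$ via the $p$-th powers of transition functions) are all sound. The one place where you should be slightly more careful is the final appeal to the uniqueness clause of Proposition \ref{ad,1}: that clause characterizes $\psi^{p}$ among families indexed by \emph{all} Noetherian schemes, whereas $\{F_{Z}^{*}\}$ is only defined on schemes of characteristic $p$, so the uniqueness statement does not literally apply to it as a black box. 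The fix is to run the uniqueness proof itself rather than quote its conclusion: for a vector bundle $E$ on $Z$, pull back along the flag bundle $\pi:\mathrm{Fl}(E)\rightarrow Z$, which is again a scheme of characteristic $p$; since $\pi^{*}$ is injective on $K_{0}$, both operations commute with $\pi^{*}$ by your naturality step, both are additive, and both send line bundles to their $p$-th tensor powers, one gets $\pi^{*}F_{Z}^{*}(E)=\pi^{*}\psi^{p}(E)$ and hence $F_{Z}^{*}(E)=\psi^{p}(E)$. With that sentence added your proof is complete and matches the route the paper indicates.
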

\begin{proof}
 This is a well-known fact (see \cite{Bern 1}, Pag. 64, Proposition 2.15), which is also a consequence of the splitting principle (see \cite{Man}, Par. 5).
\end{proof}
When $Y$ is in addition a scheme of characteristic $p>0$, Pink and R\"ossler have given a short proof of Theorem \ref{Arr} using the tools introduced in this 
section, which is the following:
\begin{theo}
The Adams-Riemann-Roch theorem is true under the assumption given in the beginning of this subsection i.e., the following equality 
$$\psi^{p}(R^{\bullet}f_{*}(E))=R^{\bullet}f_{*}(\theta ^{p}(\Omega_{f})^{-1}\otimes\psi^{p}(E)) $$
holds in $K_{0}(Y)[\frac{1}{p}]:=K_{0}(Y)\otimes_{\mathbb{Z}}\mathbb{Z}[\frac{1}{p}]$.
\end{theo}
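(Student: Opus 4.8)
The plan is to reduce the statement to the two structural facts established earlier in this subsection: that in the virtual category (hence in $K_0$) one has $F^{*}F_{*}\mathcal{O}_{X}\cong\theta^{p}(\Omega_{f})$ coming from Proposition \ref{gci} and Proposition \ref{btp}, and that the $p$-th Adams operation $\psi^{p}$ is realized by the absolute Frobenius pull-back $F_{Z}^{*}$ (Proposition \ref{frebad}). First I would set up the commutative diagram (4) relating the absolute Frobenius $F_{X}$, the absolute Frobenius $F_{Y}$, the base-changed scheme $X'$ with projection $J:X'\to X$ and structure morphism $f':X'\to Y$, and the relative Frobenius $F=F_{X/Y}$, so that $F_{X}=J\circ F$ and $f\circ F_{X}=F_{Y}\circ f'$. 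Since $Y$ has characteristic $p$, by Proposition \ref{frebad} the left-hand side $\psi^{p}(R^{\bullet}f_{*}E)$ equals $F_{Y}^{*}(R^{\bullet}f_{*}E)$.

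The next step is a flat base change argument along the Cartesian square formed by $J$, $f'$, $f$, $F_{Y}$: because $F_{Y}$ is flat (indeed the Frobenius on a smooth-over-base or reduced situation behaves well, but flatness of cohomology base change here is the standard one for the Cartesian square), one has $F_{Y}^{*}R^{\bullet}f_{*}E\cong R^{\bullet}f'_{*}J^{*}E$ in $K_{0}(Y)$. Now I would rewrite $R^{\bullet}f'_{*}$ by factoring the relative Frobenius: applying the projection formula and Lemma \ref{l,f} (which gives that $F=F_{X/Y}$ is finite and flat, so $R^{\bullet}F_{*}$ collapses to $F_{*}$ with no higher terms), one gets $R^{\bullet}f'_{*}(F_{*}\mathcal{G})\cong R^{\bullet}f_{*}(\mathcal{G})$ for suitable $\mathcal{G}$ on $X$, using $f'\circ F=f$. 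The key computation is to express $J^{*}E$ through the relative Frobenius. Since $F_{X}=J\circ F$ and $F_{X}^{*}=\psi^{p}$ on $K_{0}(X)$, one has $F^{*}J^{*}E=\psi^{p}(E)$, and the adjunction morphism $F^{*}F_{*}\mathcal{O}_{X}\to\mathcal{O}_{X}$ together with the projection formula lets me push the class $J^{*}E$ down along $F$ so that $F_{*}$ of the relevant twist matches $\theta^{p}(\Omega_{f})^{-1}\otimes\psi^{p}(E)$ after inverting $p$.

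The heart of the argument—and the step I expect to be the main obstacle—is the Bott-class bookkeeping: showing that $R^{\bullet}f'_{*}J^{*}E$ equals $R^{\bullet}f_{*}\bigl(\theta^{p}(\Omega_{f})^{-1}\otimes\psi^{p}(E)\bigr)$ in $K_{0}(Y)[\tfrac1p]$. Concretely, one writes $R^{\bullet}f'_{*}J^{*}E=R^{\bullet}f_{*}R^{\bullet}F_{*}F^{*}(\psi^{p}E)$ (using $f=f'\circ F$ and $F^{*}J^{*}E=\psi^{p}E$), then by the projection formula $R^{\bullet}F_{*}F^{*}(\psi^{p}E)\cong (R^{\bullet}F_{*}\mathcal{O}_{X})\otimes\psi^{p}(E)=F_{*}\mathcal{O}_{X}\otimes\psi^{p}(E)$, the last equality because $F$ is finite flat. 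The remaining point is that the class of $F_{*}\mathcal{O}_{X}$, pulled back or compared via $F^{*}$, relates to $\theta^{p}(\Omega_{f})$: precisely, Proposition \ref{gci} gives $F^{*}F_{*}\mathcal{O}_{X}\cong\theta^{p}(\Omega_{f})$, and since $F^{*}$ is an isomorphism on $K_{0}$ after inverting $p$ (its composite with $F_{*}$ is multiplication by the rank $p^{n}$ up to the Bott twist), one can invert to recover the factor $\theta^{p}(\Omega_{f})^{-1}$.

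The subtlety requiring care is the \emph{direction} of the twist—whether $\theta^{p}(\Omega_{f})$ appears covariantly or as its inverse—and ensuring that inverting $p$ is genuinely needed only to make $F^{*}$ invertible on $K_{0}$, not anywhere the integral statement already holds. I would handle this by tracking everything through the identity $F^{*}F_{*}\mathcal{O}_{X}=\theta^{p}(\Omega_{f})$ in $K_0(X)$ and using Proposition \ref{frebad} to identify $F^{*}$ with $\psi^{p}$, so that the Bott element enters exactly as the relative ``Todd-type'' correction predicted by the formula. Once the projection-formula and base-change isomorphisms are assembled and the single Frobenius-twist identity from Proposition \ref{gci} is substituted, the desired equality follows by a direct comparison in $K_{0}(Y)[\tfrac1p]$.
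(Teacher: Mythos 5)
Your overall strategy is the right one, and it is in fact the only route the paper offers: the paper's ``proof'' of this theorem is a bare citation to Pink--R\"ossler, and the ingredients you invoke (base change along the Frobenius square, $\psi^{p}=F_{Z}^{*}$ from Proposition \ref{frebad}, $F^{*}F_{*}\mathcal{O}_{X}=\theta^{p}(\Omega_{f})$ from Propositions \ref{btp} and \ref{gci}, finiteness and flatness of $F=F_{X/Y}$ from Lemma \ref{l,f}) are exactly the ones Section 3.2 assembles for this purpose. However, the pivotal computation in your third paragraph is wrong as written, and the error is precisely at the spot you flag as ``the main obstacle.'' The identity $R^{\bullet}f'_{*}J^{*}E=R^{\bullet}f'_{*}\bigl(R^{\bullet}F_{*}F^{*}(J^{*}E)\bigr)$ is false: inserting $F_{*}F^{*}$ is not the identity on $K_{0}$, and by the projection formula it changes the class by the factor $F_{*}\mathcal{O}_{X}$, which has rank $p^{n}$, not $1$. (Also $F^{*}(\psi^{p}E)$ does not typecheck: $\psi^{p}E=F_{X}^{*}E$ lives on $X$, while $F^{*}$ pulls back from $X'$ to $X$; you mean $F^{*}(J^{*}E)=\psi^{p}E$.) What your chain actually establishes is $R^{\bullet}f_{*}(\psi^{p}E)=R^{\bullet}f'_{*}(F_{*}\mathcal{O}_{X}\otimes J^{*}E)$, which differs from the quantity you want by exactly the twist whose handling you then leave to the vague phrase ``one can invert to recover the factor $\theta^{p}(\Omega_{f})^{-1}$.''

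The missing idea is that the cancellation must be run from the other side, using that $F^{*}$ is a ring homomorphism so that $\theta^{p}(\Omega_{f})^{-1}=(F^{*}F_{*}\mathcal{O}_{X})^{-1}=F^{*}\bigl((F_{*}\mathcal{O}_{X})^{-1}\bigr)$ in $K_{0}(X)[\tfrac1p]$ (invertibility of the rank-$p^{n}$ class $F_{*}\mathcal{O}_{X}$ after inverting $p$ is the content of the Remark following the theorem, and is the \emph{only} place $\tfrac1p$ is needed). Then, starting from the right-hand side, $R^{\bullet}f_{*}\bigl(\theta^{p}(\Omega_{f})^{-1}\otimes\psi^{p}(E)\bigr)=R^{\bullet}f'_{*}F_{*}\bigl(F^{*}((F_{*}\mathcal{O}_{X})^{-1})\otimes F^{*}J^{*}E\bigr)=R^{\bullet}f'_{*}\bigl((F_{*}\mathcal{O}_{X})^{-1}\otimes F_{*}\mathcal{O}_{X}\otimes J^{*}E\bigr)=R^{\bullet}f'_{*}(J^{*}E)=F_{Y}^{*}R^{\bullet}f_{*}E=\psi^{p}(R^{\bullet}f_{*}E)$, where the second equality is one application of the projection formula and the factor $F_{*}\mathcal{O}_{X}$ cancels on the nose. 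One secondary correction: the base-change isomorphism $F_{Y}^{*}R^{\bullet}f_{*}E\cong R^{\bullet}f'_{*}J^{*}E$ should not be justified by flatness of $F_{Y}$ (the absolute Frobenius of a quasi-compact scheme of characteristic $p$ is flat only when the scheme is regular, by Kunz); it holds because $f$ is flat, so the Cartesian square is tor-independent and base change for perfect complexes applies.
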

\begin{proof} See \cite{Pi}, page 1074. 
\end{proof}
\begin{rem} In\cite{Pi}, there is an inverse for a vector bundle $E$ in $K(X)_{\mathbb{Q}}$ of a regular scheme $X$ because the Grothendieck-$\gamma$ filtration is finer than the topological filtration (see Chap.III \cite{FL}) so that for any vector bundle $E$ of rank $r$, $E-r$ is nilpotent or $E$ is invertible in $K(X)_{\mathbb{Q}}$. Formally, if we denote $E$ by $x$ in $K(X)_{\mathbb{Q}}$, then we have 
$\frac{1}{x}=\frac{1}{r-(r-x)}=\frac{1}{r(1-\frac{r-x}{r})}=\frac{1}{r}+\frac{r-x}{r^{2}}+\frac{(r-x)^{2}}{r^{3}}+\cdots \frac{(r-x)^{n}}{r^{n+1}}+\cdots $. But for some $n$, $(r-x)^{n}$ vanishes, so the $\frac{1}{x}$ is equal to a sum of finite terms $(r-x)^{i}$ with $i<n$ in $K(X)_{\mathbb{Q}}$ (Also, see \cite{Berth} VII).
\end{rem}
\section{Main Results}
We have already introduced all the necessary ingredients for our theorem.
\begin{theo} \label{mrs} $f:X\rightarrow S$ be projective and smooth of relative dimension $n$, where $S$ is a quasi-compact scheme of characteristic $p>0$ and carries an ample invertible sheaf. Let $E$ be a vector bundle over $X$ and $\Omega_{f}$ be the sheaf of relative 
differentials of the morphism $f$.  

 (I) Then we have a determinant version of Adams-Riemann-Roch theorem
\begin{align}
 \psi^{p}((\det Rf_{*}E)^{\otimes p^{n(n+2)}})\cong&\det Rf_{*}(\widetilde{\theta^{p}(\Omega_{f})^{-1}}\otimes \psi^{p}(E))\notag
\end{align}
where we denote $\widetilde{\theta^{p}(\Omega_{f})^{-1}}$ by $(-1)^{(n+1)}(\sum_{i=0}^{n+1}\binom{n+2}{i}(\theta^{p}(\Omega_{f}))^{\otimes (n+1-i)}(-p^{n})^{i})$.

(II) The isomorphism in (I) is functorial, i.e. , if $g :S^{'}\rightarrow S$ is a base extension and the corresponding fiber product is $X^{'}$:
$$\xymatrix{X^{'}\ar[r]^{g^{'}}\ar[d]_{f^{'}}&X\ar[d]^{f}\\
S^{'}\ar[r]^{g}&S },$$
then there are canonical isomorphisms over $S^{'}$:
$$\xymatrix{g^{*}(\psi^{p}(\text{det}_{S}(Rf_{*}(E))^{\otimes p^{3}})\ar[rr]^{\cong}\ar[d]_{\cong}& &\psi^{p}(\det_{S^{'}} Rf^{'}_{*}g^{'*}E)^{\otimes p^{3}}\ar@{.>}[d]^{\cong}\\
  A\ar[rr]^{\cong}& &B}$$
where \begin{align} 
A=: &g^{*}(\det Rf_{*}(\widetilde{\theta^{p}(\Omega_{f})^{-1}}\otimes \psi^{p}(E)))\notag
\end{align}
\begin{align}
 B=: &\det Rf^{'}_{*}(\widetilde{\theta^{p}(\Omega_{f^{'}})^{-1}}\otimes \psi^{p}(g^{'*}E))\notag
\end{align}
and $\Omega_{f^{'}}$ is the relative differentials of the morphism $f^{'}$.
\end{theo}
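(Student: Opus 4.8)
The plan is to reduce part (I) to the characteristic-$p$ Adams-Riemann-Roch theorem of Pink and R\"ossler together with the vanishing Corollary \ref{trs}, the bridge between them being a purely combinatorial identity in $K_{0}(X)$ that is the whole reason for the strange-looking definition of $\widetilde{\theta^{p}(\Omega_{f})^{-1}}$. Write $\beta=\theta^{p}(\Omega_{f})$, an honest vector bundle of rank $q:=p^{n}$ (since $\text{rk}\,\Omega_{f}=n$). The first step is to record, in $K_{0}(X)$, the identity
$$\beta\otimes\widetilde{\theta^{p}(\Omega_{f})^{-1}}=p^{n(n+2)}\cdot\mathcal{O}_{X}+(-1)^{n+1}(\beta-q)^{\otimes(n+2)}.$$
This follows by expanding $(\beta-q)^{\otimes(n+2)}=\sum_{i=0}^{n+2}\binom{n+2}{i}\beta^{\otimes(n+2-i)}(-q)^{i}$, factoring $\beta$ out of the terms with $i\leq n+1$, and comparing the remaining sum with the definition of $\widetilde{\theta^{p}(\Omega_{f})^{-1}}$; the isolated $i=n+2$ term contributes exactly $(-q)^{n+2}=(-1)^{n}p^{n(n+2)}$. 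Multiplying by $\beta^{-1}$ in $K_{0}(X)[\tfrac1p]$ yields the decomposition $\widetilde{\theta^{p}(\Omega_{f})^{-1}}=p^{n(n+2)}\beta^{-1}+(-1)^{n+1}\beta^{-1}\otimes(\beta-q)^{\otimes(n+2)}$.

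Next I would tensor this decomposition with $\psi^{p}(E)$, apply $R^{\bullet}f_{*}$, and then $\det$. The error term $(-1)^{n+1}\beta^{-1}\otimes(\beta-q)^{\otimes(n+2)}\otimes\psi^{p}(E)$ has the shape $(H_{0}-H_{1})^{\otimes(n+2)}\otimes H$ with $H_{0}=\beta$, $H_{1}=q\cdot\mathcal{O}_{X}$ of equal rank $p^{n}$, and $H$ the virtual bundle $(-1)^{n+1}\beta^{-1}\otimes\psi^{p}(E)$; since the exponent meets the bound $n+2$, Corollary \ref{trs} (which I would first note extends to virtual $H$ by additivity of $\det Rf_{*}$ and multilinearity of the Deligne pairing) trivialises its determinant canonically and compatibly with base change. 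For the main term, the Pink--R\"ossler identity $R^{\bullet}f_{*}(\beta^{-1}\otimes\psi^{p}(E))=\psi^{p}(R^{\bullet}f_{*}E)$ combined with the fact that $\det$ intertwines $\psi^{p}$ with the $p$-th tensor power (so $\det(\psi^{p}z)=\det(z)^{\otimes p}$, by the splitting principle and $\psi^{p}L=L^{\otimes p}$) gives $\det Rf_{*}(\beta^{-1}\otimes\psi^{p}(E))^{\otimes p^{n(n+2)}}\cong(\det Rf_{*}E)^{\otimes p^{n(n+2)+1}}$, which is exactly $\psi^{p}((\det Rf_{*}E)^{\otimes p^{n(n+2)}})$.

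The hard part is that the computation above a priori only produces an equality of classes in $\text{Pic}(S)\otimes\mathbb{Z}[\tfrac1p]$, whereas the statement demands a canonical isomorphism of honest line bundles. Two features restore integrality. First, the tensor power $p^{n(n+2)}$ is precisely what clears the denominators introduced by $\beta^{-1}$, so that $p^{n(n+2)}\beta^{-1}$ becomes congruent to the integral class $\widetilde{\theta^{p}(\Omega_{f})^{-1}}$ modulo the Corollary-\ref{trs} error. Second, the Adams-Riemann-Roch input must be used in the refined form lifted to the virtual category, namely the isomorphism $F^{*}F_{*}\mathcal{O}_{X}\cong\theta^{p}(\Omega_{f})$ in $V(X)$ recorded after Proposition \ref{gci}, which is what upgrades the K-theoretic equality to a canonical isomorphism. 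I would therefore assemble the asserted isomorphism as a composite of canonical arrows: the additivity isomorphisms of $\det Rf_{*}$ coming from the identity above, the canonical trivialisation of the error term from Corollary \ref{trs}, and the virtual-category lift of the Pink--R\"ossler theorem. Since each arrow is canonical, so is the composite, and this is the main obstacle of the argument.

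Finally, for part (II) I would deduce functoriality arrow-by-arrow from the base-change behaviour of each ingredient. The relative differentials, the Bott element and the Adams operation all commute with the base change $g$, so $\Omega_{f'}\cong g'^{*}\Omega_{f}$, $\theta^{p}(\Omega_{f'})\cong g'^{*}\theta^{p}(\Omega_{f})$, $\widetilde{\theta^{p}(\Omega_{f'})^{-1}}\cong g'^{*}\widetilde{\theta^{p}(\Omega_{f})^{-1}}$ and $\psi^{p}(g'^{*}E)\cong g'^{*}\psi^{p}(E)$; the determinant of cohomology commutes with $g$ via $g^{*}\det_{S}(Rf_{*}F)\cong\det_{S'}(Rf'_{*}g'^{*}F)$ (axiom (iii) of the extended determinant functor together with cohomology base change, the isomorphisms (10) and (11) of Theorem \ref{functor}(II)); and the Corollary-\ref{trs} trivialisation is by construction stable under base change. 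Substituting these identifications into the square and invoking naturality of the additivity isomorphisms of $\det Rf_{*}$ then forces all four sides to commute, which is the required functoriality. The only genuine work here is the bookkeeping of signs and the commutativity of the resulting compatibility cube, not any new idea.
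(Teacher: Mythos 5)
Your combinatorial identity $\beta\otimes\widetilde{\theta^{p}(\Omega_{f})^{-1}}=p^{n(n+2)}+(-1)^{n+1}(\beta-p^{n})^{\otimes(n+2)}$ is correct and is exactly the bookkeeping the paper performs in steps (3)--(5) of its proof, and your treatment of the error term via Corollary \ref{trs} matches the paper. The gap is in the main term. You divide by $\beta$ in $K_{0}(X)[\frac{1}{p}]$ and then quote the Pink--R\"{o}ssler identity $R^{\bullet}f_{*}(\beta^{-1}\otimes\psi^{p}(E))=\psi^{p}(R^{\bullet}f_{*}E)$; but $\beta^{-1}$ and $\beta^{-1}\otimes(\beta-p^{n})^{\otimes(n+2)}$ are not objects of the virtual category $V(X)$ (multiplicative inverses exist only after localizing the virtual and Picard categories, which is Eriksson's machinery and precisely what the paper is structured to avoid), so $\det Rf_{*}$ cannot be applied to your decomposition and the ``composite of canonical arrows'' you describe has no middle terms to pass through. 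Likewise the Pink--R\"{o}ssler theorem is an equality of classes in $K_{0}(S)[\frac{1}{p}]$; upgrading it to a canonical isomorphism of line bundles is the content of the theorem being proved, not an admissible input to it, and citing $F^{*}F_{*}\mathcal{O}_{X}\cong\theta^{p}(\Omega_{f})$ in $V(X)$ does not by itself effect that upgrade.

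What is missing is the mechanism by which the paper stays inside the virtual category: the relative Frobenius factorization $f=f'\circ F_{X/S}$. One first rewrites $\psi^{p}(\det Rf_{*}E)$ as $F_{S}^{*}\det Rf_{*}E\cong\det Rf'_{*}(J^{*}E)$ using Proposition \ref{frebad}, compatibility of $\det$ with pullback, and cohomological base change; one then adds on $X'$ the rank-zero error term $(-1)^{n+1}(F_{*}\mathcal{O}_{X}-p^{n})^{\otimes(n+2)}\otimes J^{*}E$, trivialized by Corollary \ref{trs}; the binomial expansion cancels $p^{n(n+2)}J^{*}E$ and leaves a sum from which one honest copy of $F_{*}\mathcal{O}_{X}$ is factored out; finally the projection formula $Rf'_{*}(F_{*}\mathcal{O}_{X}\otimes\mathcal{G})\cong Rf_{*}(F^{*}\mathcal{G})$ converts each $F_{*}\mathcal{O}_{X}$ into $F^{*}F_{*}\mathcal{O}_{X}\cong\theta^{p}(\Omega_{f})$ and $J^{*}E$ into $F_{X}^{*}E=\psi^{p}(E)$. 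No inverse of $\beta$ is ever formed, and every arrow is an isomorphism between genuine objects. Your proposal assembles the right ingredients, but by routing the argument through $K_{0}(X)[\frac{1}{p}]$ it lands in a localized group where only the class, not the canonical isomorphism, survives; the ``integrality'' you invoke at the end is a property of the answer, not a construction of the map. The functoriality argument in your last paragraph is fine in outline and agrees with the paper's part (II), but it inherits the same defect, since it presupposes that the arrows of part (I) have been constructed.
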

According to Definition \ref{vdk}, there is an induced functor from the virtual category $V(X)$ to the Picard category $\mathscr{P}is_{S}$. The isomorphisms in (I) and (II) can be viewed as 
isomorphisms of line bundles even though we did not write out the degree of graded line bundles, but that is from the isomorphism of graded line bundles in the category $\mathscr{P}is_{S}$. 
Because for any two objects $(L,l)$ and $(M,m)$ in the category $\mathscr{P}is_{S}$, they are isomorphic if and only if $L\cong M$ and $l=m$. We will apply ideas appearing in  the proof of the $p$-th Adams-Riemann-Roch Theorem in the case of characteristic $p>0$ to our proof. 
\begin{proof}
 In (I), for any prime number $p$, we have the following isomorphisms.
 Here, we explain them one by one. We will use the following diagram and some notations again:
$$\xymatrix{X\ar@/_/[ddr]_{f}\ar@{.>}[dr]|{F_{X/S}}\ar @/^/[rrd]^{F_{X}}&  & \\
                            & X^{'}\ar[r]_{J}\ar[d]^{f^{'}}&X\ar[d]^{f}\\
                             &S\ar[r]^{F_{S}}&S
                                 }$$
                                                                
We will continue to use $F$ to denote the relative Frobenius instead of $F_{X/S}$ for simplicity.

\begin{align}
&\psi^{p}(\det Rf_{*}E)^{\otimes p^{n(n+2)}}\notag\\
&\cong F^{*}_{S}(\det (Rf_{*}E))^{\otimes p^{n(n+2)}}\notag\\
&\cong(\det \text{L}F^{*}_{S}(Rf_{*}E))^{\otimes p^{n(n+2)}}\\
&\cong(\det Rf^{'}_{*}(J^{*}E))^{\otimes p^{n(n+2)}}\\
&\cong\det Rf^{'}_{*}(p^{n(n+2)}J^{*}E+(-1)^{(n+1)}(F_{*}\mathcal{O}_{X}-p^{n})^{\otimes (n+2)}\otimes J^{*}E)\\
&\cong\det Rf^{'}_{*}(p^{n(n+2)}J^{*}E+(-1)^{(n+1)}(\sum_{i=0}^{n+1}\binom{n+2}{i}(F_{*}\mathcal{O}_{X})^{\otimes (n+2-i)}(-p^{n})^{i})\otimes J^{*}E\notag\\
&- (p^{n})^{(n+2)}J^{*}E)\\
&\cong\det Rf^{'}_{*}((-1)^{(n+1)}(F_{*}\mathcal{O}_{X})\otimes(\sum_{i=0}^{n+1}\binom{n+2}{i}(F_{*}\mathcal{O}_{X})^{\otimes (n+1-i)}(-p^{n})^{i})\otimes J^{*}E)\\
&\cong\det Rf_{*}(F^{*}((-1)^{(n+1)}(\sum_{i=0}^{n+1}\binom{n+2}{i}(F_{*}\mathcal{O}_{X})^{\otimes (n+1-i)}(-p^{n})^{i})\otimes J^{*}E))\\
&\cong\det Rf_{*}((-1)^{(n+1)}(\sum_{i=0}^{n+1}\binom{n+2}{i}(F^{*}F_{*}\mathcal{O}_{X})^{\otimes (n+1-i)}(-p^{n})^{i})\otimes F^{*}J^{*}E)\\
&\cong\det Rf_{*}((-1)^{(n+1)}(\sum_{i=0}^{n+1}\binom{n+2}{i}(\theta^{p}(\Omega_{f}))^{\otimes (n+1-i)}(-p^{n})^{i})\otimes \psi^{p}(E))\\
&\cong\det Rf_{*}(\widetilde{\theta^{p}(\Omega_{f})^{-1}}\otimes \psi^{p}(E))
\end{align}

Now, we can achieve our result by illustrating these isomorphisms above one by one. Firstly, $p$-th Adams operation $\psi^{p}$ can be replaced by $F^{*}_{S}$, which is from Prop. \ref{frebad}.

By the definition of the extended determinant functor, the extended determinant functor commutes with the pull-back (see Definition \ref{exd}, item iii)), which is the
isomorphism (1).

For (2), we know that the formation of cohomology commutes with base change, i.e., $\text{L}F^{*}_{S}\cdot Rf_{*}\cong Rf^{'}_{*}\cdot\text{L}J^{*}$ (see \cite{Berth}, IV, Prop. 3.1.1), where $\text{L}F^{*}_{S}$ is the left derived functor of the functor $F^{*}_{S}$. Because $E$ is a vector bundle, $\text{L}F^{*}_{S}$ is the same with $F^{*}_{S}$

In (3), we introduce a new term $(-1)^{n+1}(F_{*}\mathcal{O}_{X}-p^{n})^{\otimes (n+2)}\otimes J^{*}E$. In Lemma \ref{l,f}, we know that $F_{*}\mathcal{O}_{X}$ is locally free of rank $p^{r}$ where $r$ is the relative dimension 
of the morphism $f$. Because $f$ is relatively smooth of dimension $n$ in our condition, $F_{*}\mathcal{O}_{X} -p^{n}$ is a virtual vector bundle of rank $0$.
According to Corollary \ref{trs}, $\det Rf^{'}_{*}((-1)^{n+1}(F_{*}\mathcal{O}_{X}-p^{n})^{\otimes (n+2)}\otimes J^{*}E)$ is trivial.

Following (3), (4) is a expansion of (3) and (5) is a recombination of (4), by making the term  $(p^{n})^{(n+2)}J^{*}E$ vanish and taking the term $F_{*}\mathcal{O}_{X}$ out from the corresponding binomial sum. 

(6) is direct from the projection formula (see \cite{Berth}, III, Pro. 3.7) and the fact $F^{*}_{X}=F^{*}J^{*}$. We know that $F_{X}^{*}$ has the same property with the $\psi^{p}$ in the case of characteristic $p>0$ by Prop. \ref{frebad}, i.e., $F_{X}^{*}(L)=L^{\otimes p}$.

When we use the property that $F^{*}_{S}$ is the same with $p$-th Adams operation $\psi^{p}$ again,  (7) is only a recombination of (6). In the Adams-Riemann-Roch theorem in characteristic $p>0$,  we have the isomorphism $F^{*}F_{*}\mathcal{O}_{X}\cong \theta^{p}(\Omega_{f})$ (see Prop. \ref{gci} and some statements before Prop . \ref{ad,1}).  Therefore, we have the isomorphism (8) by replacing $F^{*}F_{*}\mathcal{O}_{X}$ by $ \theta^{p}(\Omega_{f})$. By our notation $\widetilde{\theta^{p}(\Omega_{f})^{-1}}$,  this is just the isomorphism  (9), which finishes the proof of (I).

For (II), these isomorphisms follow almost from the definition of the extended determinant functor and some well-known facts about base-change.

The left vertical isomorphism in the diagram of (II) is direct by the pull-back of the isomorphism (I), which means that one make the pull-back for two sides of 
the isomorphism (I), i.e., 
$$g^{*}( \psi^{p}((\det Rf_{*}E)^{\otimes p^{n(n+2)}})\cong g^{*}(\det Rf_{*}(\widetilde{\theta^{p}(\Omega_{f})^{-1}}\otimes \psi^{p}(E)))=A.$$
For the upper horizontal isomorphism, this is almost to repeat proof of (I).
\begin{align}&g^{*}(\psi^{p}(\text{det}_{S}(Rf_{*}(E))^{\otimes p^{n(n+2)}})\notag\\
                    &\cong g^{*}((\text{det}_{S}(Rf_{*}(E)))^{\otimes p^{n(n+2)+1}})\\
                    &\cong(\det \text{L}g^{'*}_{S}(Rf_{*}E))^{\otimes p^{n(n+2)+1}}\\
                    &\cong(\det Rf^{'}_{*}(g^{'*}E))^{\otimes p^{n(n+2)+1}} \\
                    &\cong\psi^{p}(\det Rf^{'}_{*}(g^{'*}E))^{\otimes p^{n(n+2)}}       
                     \end{align}
Because under the extended determinant functor, one alway has line bundles $\text{det}_{S}(Rf_{*}(E)$ and $(\det Rf^{'}_{*}(g^{'*}E)$.
According to the definition of $\psi^{p}$, $\psi^{p}(L)=L^{\otimes p}$ for a line bundle $L$, this is (10) and (13).
By the Definition  \ref{exd}, item iii) of extended determinant functor, extended determinant functor commutes with the pull-ball of any morphism, which is the isomorphism (11).  As in (2), the formation of cohomology commutes with base change, $g^{*}\text{det}_{S}\cong \det \text{L}g^{'*}Rf_{*}$. Hence, this is (12). So one has upper horizontal isomorphism.
Moreover, 

\begin{align} 
A&= g^{*}(\det Rf_{*}(\widetilde{\theta^{p}(\Omega_{f})^{-1}}\otimes \psi^{p}(E)))\notag\\
&\cong \det \text{L}g^{'*}Rf_{*}(\widetilde{\theta^{p}(\Omega_{f})^{-1}}\otimes \psi^{p}(E))\notag\\
&\cong\det \text{L}g^{'*}Rf_{*}(\widetilde{\theta^{p}(\Omega_{f})^{-1}}\otimes \psi^{p}(E))\notag\\
&\cong\det Rf^{'}_{*}(g^{'*}(\widetilde{\theta^{p}(\Omega_{f})^{-1}}\otimes \psi^{p}(E)))\notag\\
&\cong\det Rf^{'}_{*}(g^{'*}((-1)^{(n+1)}(\sum_{i=0}^{n+1}\binom{n+2}{i}\theta^{p}(\Omega_{f})^{\otimes (n+1-i)}(-p^{n})^{i}))\otimes \psi^{p}(g^{'*}E))\notag\\
&\cong\det Rf^{'}_{*}(((-1)^{(n+1)}(\sum_{i=0}^{n+1}\binom{n+2}{i}\theta^{p}(\Omega_{f^{'}})^{\otimes (n+1-i)}(-p^{n})^{i})^{p})\otimes \psi^{p}(g^{'*}E))\notag\\
&\cong\det Rf^{'}_{*}(\widetilde{\theta^{p}(\Omega_{f^{'}})^{-1}}\otimes\psi^{p}(g^{'*}E))\notag\\
&=B\notag
\end{align}
By the composition of the upper horizontal isomorphism, the left vertical isomorphism and the lower horizontal isomorphism,  
one has the right vertical isomorphism, i.e., $\psi^{p}(\det Rf^{'}_{*}(g^{'*}E))^{\otimes p^{n(n+2)}}\cong B$. One observes that 
\\$\psi^{p}(\det Rf^{'}_{*}(g^{'*}E))^{\otimes p^{n(n+2)}}\cong B$ is just from the morphism $f^{'}$, which has a same type with the isomorphism in (I) from the morphism $f$. That is just the functoriality of the isomorphism (I).
\end{proof}
\begin{rem}
In our theorem, there is no higher term of power larger than $n+1$ for  the term $\theta^{p}(\Omega_{f})$ because we have Cor.\ref{trs}, i.e., $\det Rf^{'}_{*}((F_{*}\mathcal{O}_{X}-p^{n})^{\otimes (n+2)}\otimes J^{*}E)$ is trivial. By some combinatorial technique,  $p^{n(n+2)}J^{*}E$ is killed in $(F_{*}\mathcal{O}_{X}-p^{n})^{\otimes n+2}\otimes J^{*}E$. In the remaining term $$(-1)^{(n+1)}(\sum_{i=0}^{n+1}\binom{n+2}{i}(F_{*}\mathcal{O}_{X})^{\otimes (n+2-i)}(-p^{n})^{i})\otimes J^{*}E$$, one can extract a term $F_{*}\mathcal{O}_{X}$ such 
that $(-1)^{(n+1)}(\sum_{i=0}^{n+1}\binom{n+2}{i}(F_{*}\mathcal{O}_{X})^{\otimes (n+2-i)}(-p^{n})^{i})\otimes J^{*}E\cong F_{*}\mathcal{O}_{X}
\otimes (-1)^{(n+1)}(\sum_{i=0}^{n+1}\binom{n+2}{i}(F_{*}\mathcal{O}_{X})^{\otimes (n+1-i)}(-p^{n})^{i})\otimes J^{*}E$. By projection formula under the determinant functor,  only remaining term is \\
$(-1)^{(n+1)}(\sum_{i=0}^{n+1}\binom{n+2}{i}(F^{*}F_{*}\mathcal{O}_{X})^{\otimes (n+1-i)}(-p^{n})^{i})\otimes \psi^{p}(E)$. Therefore,  the term $(-1)^{(n+1)}(\sum_{i=0}^{n+1}\binom{n+2}{i}(F^{*}F_{*}\mathcal{O}_{X})^{\otimes (n+1-i)}(-p^{n})^{i})$ can be thought as  the inverse of $\theta^{p}(\Omega_{f})$, denoted by $\widetilde{\theta^{p}(\Omega_{f})^{-1}}$ rather than its real inverse.
\end{rem}

\begin{coro}\label{functor}
In particular, under the conditions in the theorem above with the relative dimension $n=1$, if one takes the the vector bundle $E$ to be a line bundle $L$ and $p=2$, then one has 
 the functorial Riemann Roch theorem 
 \begin{align}
(\det &Rf_{*}L)^{\otimes 18 }\notag\\
&\cong (\det  Rf_{*}\mathcal{O})^{\otimes 18 }\otimes (\det  Rf_{*}(L^{\otimes 2}\otimes 
\omega^{-1}))^{\otimes 6}\otimes  (\det  Rf_{*}(L\otimes \omega^{-1}))^{\otimes (-6)}\notag
\end{align} 
which coincides with the statement of Deligne's functorial Riemann Roch theorem and the Mumford isomorphism: 
 $\det Rf_{*}(\omega^{n})\cong(\det Rf_{*}
(\omega))^{6n^{2}-6n+1}$, i.e., $\lambda_{n}=\lambda_{1}^{6n^{2}-6n+1}$ if we denote $\det Rf_{*}\mathcal{\omega}^{\otimes n}$ by $\lambda_{n}$ for $n\geqslant 0$,where $\omega$ is the canonical sheaf of the morphism $f$.
\end{coro}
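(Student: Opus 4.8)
The plan is to specialize Theorem~\ref{mrs}(I) to $n=1$, $p=2$ and $E=L$, and then to turn the resulting isomorphism into Deligne's formula by means of the Deligne pairing. Because $f$ has relative dimension $1$, the sheaf $\Omega_{f}=\omega$ is a line bundle, so Definition~\ref{ad,2}(1) gives $\theta^{2}(\omega)=\mathcal{O}_{X}+\omega$ in $K_{0}(X)$. Substituting this into the defining expression for $\widetilde{\theta^{2}(\omega)^{-1}}$ and using $\binom{3}{0}=\binom{3}{3}=1$, $\binom{3}{1}=\binom{3}{2}=3$, one finds $\widetilde{\theta^{2}(\omega)^{-1}}=\omega^{\otimes 2}-4\,\omega+7$ as a virtual bundle. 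Since $\psi^{2}$ doubles a line bundle and $\psi^{2}(L)=L^{\otimes 2}$, writing $\lambda(M):=\det Rf_{*}M$ the statement of Theorem~\ref{mrs}(I) becomes the honest isomorphism
$$\lambda(L)^{\otimes 16}\;\cong\;\lambda(L^{\otimes 2})^{\otimes 7}\otimes\lambda(\omega\otimes L^{\otimes 2})^{\otimes(-4)}\otimes\lambda(\omega^{\otimes 2}\otimes L^{\otimes 2}).$$

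Next I would expand each determinant on the right through the $n=1$ case of the Deligne pairing. The theorem preceding Proposition~\ref{d,p} reads $\langle A,B\rangle\cong\det Rf_{*}((A-\mathcal{O}_{X})\otimes(B-\mathcal{O}_{X}))$, i.e. the canonical isomorphism $\langle A,B\rangle\cong\lambda(A\otimes B)\otimes\lambda(A)^{-1}\otimes\lambda(B)^{-1}\otimes\lambda(\mathcal{O}_{X})$, and by the remark after Definition~\ref{d,p.n} this pairing is symmetric and bilinear. Expanding $\lambda(L^{\otimes 2})$, $\lambda(\omega\otimes L^{\otimes 2})$ and $\lambda(\omega^{\otimes 2}\otimes L^{\otimes 2})$ accordingly and collecting, the $L$-dependent terms assemble into $\lambda(L)^{\otimes 8}$ and $\langle L,L\otimes\omega^{-1}\rangle^{\otimes 4}$, leaving an $L$-independent factor $\lambda(\omega^{\otimes 2})\otimes\lambda(\omega)^{\otimes(-4)}\otimes\lambda(\mathcal{O}_{X})^{-1}$. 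Since that factor does not involve $L$, I would pin it down by specializing the identity to $L=\mathcal{O}_{X}$, where the pairing is trivial; this forces it to equal $\lambda(\mathcal{O}_{X})^{\otimes 8}$. The outcome is the exact relation
$$\lambda(L)^{\otimes 8}\;\cong\;\lambda(\mathcal{O}_{X})^{\otimes 8}\otimes\langle L,L\otimes\omega^{-1}\rangle^{\otimes 4}.$$

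From this relation both stated consequences follow (up to the sign discussed below). Expanding the pairing once more as $\langle L,L\otimes\omega^{-1}\rangle\cong\lambda(L^{\otimes 2}\otimes\omega^{-1})\otimes\lambda(L)^{-1}\otimes\lambda(L\otimes\omega^{-1})^{-1}\otimes\lambda(\mathcal{O}_{X})$ and substituting converts it into $\lambda(L)^{\otimes 12}\cong\lambda(\mathcal{O}_{X})^{\otimes 12}\otimes\lambda(L^{\otimes 2}\otimes\omega^{-1})^{\otimes 4}\otimes\lambda(L\otimes\omega^{-1})^{\otimes(-4)}$; three tensor powers of this identity coincide with two tensor powers of Deligne's displayed formula, so Deligne's degree-$18$ isomorphism is recovered up to $2$-torsion. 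For the Mumford isomorphism I would feed special values of $L$ into the degree-$8$ relation: $L=\omega$ (where $\langle\omega,\mathcal{O}_{X}\rangle$ is trivial) gives $\lambda(\omega)\cong\lambda(\mathcal{O}_{X})$, whereupon the $L$-independent factor gives $\lambda(\omega^{\otimes 2})\cong\lambda(\mathcal{O}_{X})^{\otimes 13}$ and hence $\langle\omega,\omega\rangle\cong\lambda(\mathcal{O}_{X})^{\otimes 12}$, and finally $L=\omega^{\otimes n}$ with bilinearity yields $\langle\omega^{\otimes n},\omega^{\otimes(n-1)}\rangle\cong\langle\omega,\omega\rangle^{\otimes n(n-1)}$, whence $\lambda_{n}\cong\lambda_{1}^{6n^{2}-6n+1}$.

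The routine part is the bookkeeping of the second paragraph. The real crux is the passage from the power relation that the theorem literally produces to the primitive Deligne--Riemann--Roch identity: the Frobenius/Bott construction in characteristic $p$ carries the extra factor $p^{2}=4$ (this is exactly the mismatch between the exponent $16$ above and Deligne's $18$), so the theorem delivers Deligne's and Mumford's isomorphisms only after the residual defect is checked to be killed, i.e. up to the $2$-torsion that is already the ``sign'' indeterminacy of Deligne's own statement. The remaining care is to carry out every step as a canonical isomorphism in $\mathscr{P}is_{S}$, respecting the gradings, so that the final identification is genuinely functorial rather than a mere equality of first Chern classes.
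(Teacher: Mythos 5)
You should first note that the paper itself gives no argument here: its ``proof'' of Corollary~\ref{functor} is a citation to Cor.~4.10 of \cite{Qxu}. Your reconstruction therefore supplies content the paper omits, and the computational skeleton is right: with $n=1$, $p=2$ one indeed gets $\widetilde{\theta^{2}(\omega)^{-1}}=\omega^{\otimes 2}-4\omega+7$, hence $\lambda(L)^{\otimes 16}\cong\lambda(L^{\otimes 2})^{\otimes 7}\otimes\lambda(\omega\otimes L^{\otimes 2})^{\otimes(-4)}\otimes\lambda(\omega^{\otimes 2}\otimes L^{\otimes 2})$, and your Deligne-pairing bookkeeping correctly collapses this to $\lambda(L)^{\otimes 8}\cong\lambda(\mathcal{O}_{X})^{\otimes 8}\otimes\langle L,L\otimes\omega^{-1}\rangle^{\otimes 4}$ and then to $\lambda(L)^{\otimes 12}\cong\lambda(\mathcal{O}_{X})^{\otimes 12}\otimes\lambda(L^{\otimes 2}\otimes\omega^{-1})^{\otimes 4}\otimes\lambda(L\otimes\omega^{-1})^{\otimes(-4)}$ (I checked the coefficients $4\langle L,L\rangle-4\langle\omega,L\rangle+\langle\omega,\omega\rangle+8\lambda(L)-2\lambda(\omega)-2\lambda(\mathcal{O}_{X})$, which match yours).

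The genuine gap is the last step, which you flag but do not close. What you obtain is a relation whose cube equals the square of Deligne's degree-$18$ isomorphism, and, for Mumford, only $\lambda(\omega)^{\otimes 8}\cong\lambda(\mathcal{O}_{X})^{\otimes 8}$ and $\lambda_{n}^{\otimes 8}\cong\lambda_{1}^{\otimes 8(6n^{2}-6n+1)}$. In the Picard category $\mathscr{P}is_{S}$ one cannot divide exponents: an isomorphism $M^{\otimes 2}\cong N^{\otimes 2}$ does not produce $M\cong N$ --- the obstruction is genuine ($2$-torsion in $\mathrm{Pic}(S)$, and even when the underlying bundles agree there is no canonical square root of the isomorphism). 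This is not the same as the ``unique up to sign'' in Deligne's theorem, which concerns uniqueness of an isomorphism between two \emph{already identified} line bundles; conflating the two is where your argument falls short of the corollary's literal claim. In particular ``$L=\omega$ gives $\lambda(\omega)\cong\lambda(\mathcal{O}_{X})$'' does not follow from the degree-$8$ relation; one needs an independent input (Grothendieck--Serre duality) to descend from the $8$-th power. So as written you have proved the square of the displayed degree-$18$ isomorphism and the $8$-th power of $\lambda_{n}=\lambda_{1}^{6n^{2}-6n+1}$, not the statements themselves; any complete proof (including, presumably, the one in \cite{Qxu}) must either exhibit the primitive isomorphisms directly or explain how the relevant roots are extracted canonically.
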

\begin{proof}
This corollary is one of the main results of \cite{Qxu}. See Cor. 4.10, in \cite{Qxu}.
\end{proof}
\begin{rem}
Our theorem is not a consequence of the Adams-Riemann-Roch theorem in $K$-theory. It results from the virtual category and the Picard category, which allows that our theorem is functorial.  In \cite{Den}, D. Eriksson defined the Adams operation and the Bott class on the virtual category and proved that the Adams Riemann Roch theorem was true in the localized Picard category. Before formulating his result, he needs to define what the localized virtual category and the localized Picard category are. These definitions and proofs are impossible to state clearly in several pages. In our theorem, the Adams operation and the Bott class defined on the virtual category are not necessary. We emphasize more about the merits of the positive characteristic, which is one of our motivations.
\end{rem}
Another application of our theorem is the Knudsen-Mumford extension, which is given in \cite{Kund}. We make an introduction to the isomorphism as follows:\\
\textbf{Knudsen-Mumford isomorphism}: let $\pi: X\rightarrow B$ be a flat proper morphism of integral schemes with constant relative dimension $n$,
and let $L$ be a relatively ample line bundle over $X$. The Knudsen-Mumford extension says that there exist functorially defined line bundles 
$\lambda _{i}=\lambda_{i}(X, L, B)$ over $B$ with property:
$$\det \pi_{\ast}(L^{k})\cong \lambda_{n+1}^{\binom{k}{n+1}}\otimes \lambda_{n}^{\binom{k}{n}}\otimes\cdots \otimes \lambda_{0} \text{ for}~k>> 0.$$
For the relative dimension $n=1$, Deligne showed that $ \lambda_{2}=\langle L, L \rangle$, the Delige pairing of $L$ with itself. If in addition the varieties
$X$ and $B$ and are smooth, Deligne proved that $\lambda_{1}(L, X,B)^{2}=\langle LK^{-1}, L\rangle$, where $K=K_{X/B}=K_{X}\otimes K_{B}^{-1}$ is the relative canonical line bundle. For any relative dimension $n\geqslant 1$, we give a somehow different analogue in positive characteristic as our corollary.
\begin{coro}\label{kme}
In particular, when the vector bundle $E$ is a line bundle $L^{\otimes p}$  in Theorem \ref{mrs}, we have an analogue with the Knudsen-Mumford extension in positive characteristic, i.e.,\\
$(\det Rf_{*}L^{\otimes p})^{\otimes p^{n(n+2)+1}}\cong \lambda_{n+1}^{\binom{n+2}{n+1}(p^{n})^{n+1}}\otimes\lambda_{n}^{\binom{n+2}{n}(p^{n})^{n}} \otimes\cdots\otimes\lambda_{0},$ 
where $\lambda_{i}=\det Rf_{*}((-1)^{(n+1+i)}\theta^{p}(\Omega_{f})^{\otimes (n+1-i)}\otimes L^{\otimes p^{2}})$.
\end{coro}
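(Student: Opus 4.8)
The plan is to obtain this corollary by a direct specialization of the determinant Adams-Riemann-Roch isomorphism of Theorem \ref{mrs}(I) to the case $E=L^{\otimes p}$, and then to read off the two sides explicitly. First I would treat the left-hand side. Since $\det Rf_{*}(L^{\otimes p})$ is a line bundle and $\psi^{p}$ raises any line bundle to its $p$-th tensor power (Prop.~\ref{ad,1}(2)), applying $\psi^{p}$ to the $p^{n(n+2)}$-th power multiplies the exponent by $p$, giving
$$\psi^{p}\bigl((\det Rf_{*}L^{\otimes p})^{\otimes p^{n(n+2)}}\bigr)\cong(\det Rf_{*}L^{\otimes p})^{\otimes p^{n(n+2)+1}},$$
which is exactly the left side of the corollary.

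Next I would analyze the right-hand side $\det Rf_{*}(\widetilde{\theta^{p}(\Omega_{f})^{-1}}\otimes\psi^{p}(E))$. Using $\psi^{p}(L^{\otimes p})=(L^{\otimes p})^{\otimes p}=L^{\otimes p^{2}}$ and substituting the expansion
$$\widetilde{\theta^{p}(\Omega_{f})^{-1}}=(-1)^{n+1}\sum_{i=0}^{n+1}\binom{n+2}{i}(\theta^{p}(\Omega_{f}))^{\otimes(n+1-i)}(-p^{n})^{i},$$
I would invoke the additivity of the determinant functor, which factors through the virtual category $V(X)$ and turns the formal sum into a tensor product while sending integer coefficients to tensor powers. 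Combining the overall sign $(-1)^{n+1}$ with the factor $(-p^{n})^{i}=(-1)^{i}(p^{n})^{i}$ to produce the exponent sign $(-1)^{n+1+i}$, this reduces the right side to
$$\bigotimes_{i=0}^{n+1}\bigl(\det Rf_{*}(\theta^{p}(\Omega_{f})^{\otimes(n+1-i)}\otimes L^{\otimes p^{2}})\bigr)^{\otimes(-1)^{n+1+i}\binom{n+2}{i}(p^{n})^{i}}.$$

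Finally I would match these factors against the $\lambda_{i}$. By definition $\lambda_{i}$ equals $\det Rf_{*}(\theta^{p}(\Omega_{f})^{\otimes(n+1-i)}\otimes L^{\otimes p^{2}})$ raised to the power $(-1)^{n+1+i}$; since $(-1)^{n+1+i}$ is its own inverse, each factor in the displayed tensor product collapses to $\lambda_{i}^{\otimes\binom{n+2}{i}(p^{n})^{i}}$, and the product over $i$ from $0$ to $n+1$ yields the asserted formula. The step I expect to require the most care is precisely this sign bookkeeping: one must verify that the sign $(-1)^{n+1}$ from $\widetilde{\theta^{p}(\Omega_{f})^{-1}}$, the sign $(-1)^{i}$ hidden inside $(-p^{n})^{i}$, and the sign $(-1)^{n+1+i}$ built into the definition of $\lambda_{i}$ cancel in pairs, so that the net exponent of each $\lambda_{i}$ is the positive integer $\binom{n+2}{i}(p^{n})^{i}$. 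The remaining manipulations are formal consequences of the additivity of $\det Rf_{*}$ already used throughout Section~2 and in the proof of Theorem \ref{mrs}.
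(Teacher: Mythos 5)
Your proposal is correct and follows essentially the same route as the paper: apply Theorem \ref{mrs}(I) with $E=L^{\otimes p}$, note $\psi^{p}$ is the $p$-th tensor power on the line bundles $\det Rf_{*}L^{\otimes p}$ and $L^{\otimes p}$, expand $\widetilde{\theta^{p}(\Omega_{f})^{-1}}$, and use additivity of $\det Rf_{*}$ to split the sum into the tensor product of the $\lambda_{i}^{\binom{n+2}{i}(p^{n})^{i}}$. The only cosmetic difference is that you move the sign $(-1)^{n+1+i}$ into the exponent while the paper keeps it inside the argument of $\det Rf_{*}$; these agree by the same additivity.
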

\begin{proof}
In our theorem,  by replacing $E$ by $L^{\otimes p}$, we have :
\begin{align}
&(\det Rf_{*}L^{\otimes p})^{\otimes p^{n(n+2)+1}}\notag\\
 &\psi^{p}((\det Rf_{*}L^{\otimes p})^{\otimes p^{n(n+2)}})\cong\det Rf_{*}(\widetilde{\theta^{p}(\Omega_{f})^{-1}}\otimes L^{\otimes p^{2}})\notag\\
& \cong \det Rf_{*}( (-1)^{(n+1)}(\sum_{i=0}^{n+1}\binom{n+2}{i}(\theta^{p}(\Omega_{f}))^{\otimes (n+1-i)}(-p^{n})^{i})\otimes L^{\otimes p^{2}})\notag\\
&\cong\otimes_{i=0}^{n+1}(\det Rf_{*}((-1)^{(n+1+i)}\theta^{p}(\Omega_{f})^{\otimes (n+1-i)}\otimes L^{\otimes p^{2}}))^{\binom{n+2}{i}(p^{n})^{i}}\notag\\
&\cong \lambda_{n+1}^{\binom{n+2}{n+1}(p^{n})^{n+1}}\otimes\lambda_{n}^{\binom{n+2}{n}(p^{n})^{n}} \otimes\cdots\otimes\lambda_{0}\notag
\end{align}
Our theorem \ref{mrs} guarantees that $\lambda_{i}=\det Rf_{*}((-1)^{(n+1+i)}\theta^{p}(\Omega_{f})^{\otimes (n+1-i)}\otimes L^{\otimes p^{2}})$ exists functorially.

\end{proof}

\section*{Acknowledgements}
I want to thank Prof. Damian R\"{o}ssler. It is he who introduced the topic to me. I am deeply indebted to his suggestion and help. Also, I am grateful to Prof. Lin Weng who is generous to share his ideas on my questions. 
\newpage
Adress of author:
\\Quan Xu
\\ Jingzhai 204, Yau mathematical Sciences center, Tsinghua University, 
\\postcode: 100084, Haidian District, Beijing China.
\\Email to: qxu@math.tsinghua.edu.cn
\\Fax:+86-10-62789445

\end{document}